\newtheorem{theorem}{Theorem}[section]
\newtheorem{corollary}[theorem]{Corollary}
\newtheorem{lemma}[theorem]{Lemma}
\theoremstyle{definition}
\newtheorem{definition}[theorem]{Definition}
\def\gJt{\gJ(\bth_t)}
\def\gJu{\gJ(\ubold_t)}
\def\gJw{\gJ(\w_t)}
\def\xbb{\bar{\x}}
\def\bD{{\boldsymbol \D}}
\def\mub{\bar{\mu}}
\def\ebar{\bar{\e}}
\def\oneb{{\bf 1}}
\def\c{{\bf c}}
\def\eb{{\bf e}}
\def\h{{\bf h}}
\def\ubold{{\bf u}}
\def\vbold{{\bf v}}
\def\w{{\bf w}}
\def\x{{\bf x}}
\def\z{{\bf z}}
\def\B{{\mathcal B}}
\def\C{{\mathcal C}}
\def\F{{\mathcal F}}
\def\M{{\mathcal M}}
\def\R{{\mathbb R}}
\def\al{\alpha}
\def\de{\delta}
\def\delb{\bar{\delta}}
\def\D{\Delta}
\def\e{\epsilon}
\def\g{\gamma}
\def\la{\lambda}
\def\LA{\Lambda}
\def\om{\omega}
\def\OM{\Omega}
\def\s{\sigma}
\def\SI{\Sigma}
\def\Th{\Theta}
\def\bD{{\boldsymbol \D}}
\def\bth{{\boldsymbol \theta}}
\def\bphi{{\boldsymbol \phi}}
\def\bzeta{{\boldsymbol \zeta}}
\def\ab{\bar{a}}
\def\bb{\bar{b}}
\def\bul{\underline{b}}
\def\kb{\bar{k}}
\def\tai{t \ap \infty}
\def\ap{\rightarrow}
\def\seq{\subseteq}
\def\bi{\{0,1\}}
\def\bp{\{-1,1\}}
\def\bz{{\bf 0}}
\def\imp{\; \Longrightarrow \;}
\def\fa{\; \forall}
\def\half{\frac{1}{2}}
\def\as{\mbox{ a.s.}}
\def\nm{\Vert}
\renewcommand{\and}{\mbox{$\wedge$}}
\def\bzt{\bzeta_{t+1}}
\def\gJ{\nabla J}
\newcommand{\bc}{\begin{center}}
\newcommand{\ec}{\end{center}}
\newcommand{\be}{\begin{equation}}
\newcommand{\ee}{\end{equation}}
\newcommand{\bd}{\begin{displaymath}}
\newcommand{\ed}{\end{displaymath}}
\newcommand{\ba}{\begin{array}}
\newcommand{\ea}{\end{array}}
\newcommand{\ben}{\begin{enumerate}}
\newcommand{\een}{\end{enumerate}}
\newcommand{\bit}{\begin{itemize}}
\newcommand{\eit}{\end{itemize}}
\newcommand{\beq}{\begin{eqnarray}}
\newcommand{\eeq}{\end{eqnarray}}
\newcommand{\btab}{\begin{tabular}}
\newcommand{\etab}{\end{tabular}}
\newcommand{\bfig}{\begin{figure}}
\newcommand{\efig}{\end{figure}}
\newcommand{\btp}{\begin{tikzpicture}}
\newcommand{\etp}{\end{tikzpicture}}
\newcommand{\nmeu}[1]{ \nm #1 \nm_2 }
\newcommand{\nmeusq}[1]{ \nm #1 \nm_2^2 }
\newcommand{\IP}[2]{ \langle #1 , #2 \rangle }
\def\nmsl1{\nm_{{\rm SL1}}}
\def\gJ{\nabla J}
\def\gJt{\gJ(\bth_t)}
\def\bD{{\boldsymbol \D}}
\numberwithin{equation}{section}
\title[Momentum-Based Algorithms with Time-Varying Parameters]
{Convergence of Momentum-Based Optimization Algorithms \\
with Time-Varying Parameters}
\author[M. Vidyasagar]{Mathukumalli Vidyasagar}
\address[M. Vidyasagar]{Indian Institute of Technology Hyderabad,
Kandi, Telangana 502285, India}
\email{\tt m.vidyasagar@iith.ac.in}
\keywords{Heavy-Ball algorithm, Momentum-based algorithms, time-varying
parameters}
\subjclass[2010]{65K10,68Q25,68W20}
\begin{document}

\begin{abstract}
In this paper, we present a unified algorithm for stochastic optimization
that makes use of a ``momentum'' term; in other words, the stochastic
gradient depends not only on the current true gradient of the objective
function, but also on the true gradient at the previous iteration.
Our formulation includes the Stochastic Heavy Ball (SHB) and the
Stochastic Nesterov Accelerated Gradient (SNAG) algorithms as
special cases.
In addition, in our formulation, the momentum term is allowed to vary
as a function of time (i.e., the iteration counter).
The assumptions on the stochastic gradient are the most general in the
literature, in that it can be biased, and have a conditional variance
that grows in an unbounded fashion as a function of time.
This last feature is crucial in order to make the theory applicable
to ``zero-order'' methods, where the gradient is estimated using
just two function evaluations.

We present a set of sufficient conditions for the convergence of the
unified algorithm.
These conditions are natural generalizations of the familiar Robbins-Monro
and Kiefer-Wolfowitz-Blum conditions for standard stochastic gradient
descent.
We also analyze another method from the literature for the SHB
algorithm with a time-varying momentum parameter, and show that it
is impracticable.

\textbf{This paper is dedicated to Alain Bensoussan on the occasion of
his 85th birthday.}
\end{abstract}

\maketitle


\section{Introduction}\label{sec:Intro}

\subsection{Scope of the Paper}\label{ssec:11}

In this paper, we study ``momentum-based'' algorithms for optimization,
where the update of the current guess is based on not only the
current guess and a random search direction (stochastic gradient), but also
the previous guess.
Familiar examples of momentum-based algorithms are the Stochastic
Heavy Ball (SHB) and the Stochastic Nesterov Accelerated Gradient (SNAG)
algorithms.
Until now, by and large these algorithms have been treated separately
in the literature.
In this paper, we introduce a general model for optimization algorithms
that incorporate a momentum term,
which is versatile enough to include both the SHB and SNAG algorithms
as special cases.
Since Stochastic Gradient Descent (SGD) is a special case of SHB, our
analysis is applicable to SGD as well, though SGD is \textit{not}
momentum-based.
A proper literature review is given in Section \ref{sec:Review}.
But for now we note that
we are aware of only \cite{Liu-et-al-NC23} and its predecessors
\cite{Zou-et-al-arxiv18,Yan-et-al-IJCAI18} that present a ``Stochastic
Unified Momentum (SUM)'' algorithm.
Our formulation is more general than the ones in these references,
in that the momentum
parameter is allowed to vary with time (iteration counter).
More important, we permit all the parameters in the algorithm,
including the step size and the momentum parameter, to be random.
This makes the present analysis applicable to ``block'' updating,
unlike previous work in
\cite{Liu-et-al-NC23,Zou-et-al-arxiv18,Yan-et-al-IJCAI18}.

Our analysis is not restricted to convex objectve functions.
Assuming only that the gradient $\gJ(\cdot)$ of the objective
function $J: \R^d \ap \R$ is globally Lipschitz-continuous,
we show that 
\be\label{eq:111}
\liminf_{\tai} \nmeu{\gJt} = 0 ,
\ee
where $\{ \bth_t \}$ are the iterations of the argument.
This is a stronger conclusion than 
\be\label{eq:112}
\lim_{\tai} \min_{0 \leq \tau \leq t} \nmeusq{\gJ(\bth_\tau)} = 0 .
\ee
To see this, suppose $\gJ(\bth_T) = \bz$ for some $T$;
then \eqref{eq:112} holds, but not necessarily \eqref{eq:111}.
If the convexity of $J(\cdot)$ is replaced by
the Kurdyka-{\L}ojasiewicz (KL) property,
then we deduce the almost-sure convergence of the iterations to the
set of global minimizers.
If the objective function satisfies the stronger Polyak-{\L}ojasiewicz (PL)
property, then we not only deduce the almost-sure convergence of the algorithm,
but also find bounds on the \textit{rate} of convergence.
When the stochastic gradient is unbiased and has bounded variance,
these bounds match the best-achievable upper bounds for arbitrary
\textit{convex} objective functions, as shown in \cite{Arjevani-et-al-MP23}.
However, a novel feature of our analysis is that the assumptions on the
stochastic gradient are the most general (i.e., least restrictive) to date.

We use the same assumptions as in \cite{MV-RLK-SGD-arxiv23,MV-RLK-SGD-JOTA24},
reproduced here in Section \ref{ssec:21}.
Specifically, it permitted for the stochastic gradient to be ``biased''
in the sense that its conditional expectation need not equal the true gradient.
Further, the conditional variance of the stochastic gradient is permitted to
grow with time (i.e., the iteration counter), and also as function
of the distance to the set of optima, in an unbounded manner.
This generality allows us to analyze so-called zeroth-order methods,
which approximate the gradient using only function evaluations.
Existing papers cannot directly handle this approach.
We derive conditions under which the algorithm converges almost surely,
and not just in probability or in the mean.
Our conditions are natural generalizations of the well-known Robbins-Monro
(RM) or or Kiefer-Wolfowitz-Blum (KWB) conditions, and are thus less
restrictive than those in the literature.
All of these points are brought more clearly in Section \ref{sec:Review},
in which
we carry out a literature survey, and compare our results to previous results.

\subsection{Organization of the Paper}\label{ssec:12}

The paper is organized as follows:
In Section \ref{sec:General}, we present the general algorithm that
unifies both SHB and SNAG under a common formalism.
In Section \ref{sec:Review}, we present a literature review, in two parts.
First, we give a broad overview, and then we discuss a handful of papers
that are directly relevant to this paper.
The main results of the paper are presented in Section \ref{sec:Main}.
These main theorems include both the case of  ``full coordinate updating'',
where all components of the argument are updated at each iteration,
and ``block updating,'' where only some components (possibly chosen at
random) are updated at each iteration.
The proofs of the main theorems are given in Section \ref{sec:Proofs}.
The conclusions of the paper and some suggestions for future research
are given in Section \ref{sec:Conc}.
The Appendix contains
some relevant material showing why the approach suggested in
\cite{Sebbouh-et-al-CoLT21} is not feasible when the momentum parameter
varies with $t$.

\section{Formulation of the General Algorithm}\label{sec:General}

In this section, we set up a general class of algorithms studied
in the paper, and then show that both the 
Stochastic Heavy Ball (SHB) and Stochastic Nesterov Accelerated Gradient
(SNAG) algorithms become special cases of the general algorithm, with
suitable choices of the parameters.

The problem is to minimize a $\C^1$ objective function $J: \R^d \ap \R$.
The assumptions on $J(\cdot)$ are stated in Section \ref{ssec:21}; here
we state only the general form of the algorithm and its special cases.

Throughout the paper the symbol $\bth_t \in \R^d$ denotes the
``argument'' of the function $J(\cdot)$, which we hope will converge to
the set of minimizers of $J(\cdot)$.
However, the iterative algorithm is not based on updating $\bth_t$
directly.
Rather, it is defined in terms of two auxiliary vectors, denoted here
by $\w_t$ and $\vbold_t$.
The relationship between $\w_t$ and $\bth_t$ is given by
\be\label{eq:123}
\w_t = \bth_t + \e_t \vbold_t .
\ee
The general algorithm consists of updating formulas for $\w_t$ and $\vbold_t$,
as follows:
\be\label{eq:121}
\w_{t+1} = \w_t + a_t \vbold_t - b_t \al_t \h_{t+1} , 
\ee
\be\label{eq:122}
\vbold_{t+1} = \mu_t \vbold_t - \al_t \h_{t+1} .
\ee
In this formulation, $\al_t$ is known as the \textbf{step size},
also known as the ``learning rate'' in the machine learning literature,
while $\mu_t$ is known as the \textbf{momentum parameter}.
In addition, $\{ a_t \}$, $\{ b_t \}$, $\{ \e_t \}$ are sequences of real
constants that can be adjusted to make \eqref{eq:121}--\eqref{eq:122}
mimic various standard algorithms.
Usually they are viewed differently from the sequences $\{ \mu_t \}$
and $\{ \al_t \}$.
More assumptions on these three constants are given in Section \ref{ssec:21}.
Further, $\h_{t+1}$ is a random vector that is an approximation to
$\gJw$ (note, not necessarily to
$\gJt$), known as the \textbf{stochastic gradient}.
All of our analysis pertains to the behavior of $\w_t$ and $\vbold_t$.
However, the conclusions can be translated back to the behavior of
the original argument variable $\bth_t$, using \eqref{eq:123}.

Now it is shown that both SHB and SNAG are special cases of \eqref{eq:121}
and \eqref{eq:122} for suitable choices of the various constants.
Since Stochastic Gradient Descent (SGD) is a special case of SHB,
it too is a special case of the above algorithm.
However, SGD is not a momentum-based algorithm.

The Heavy Ball method was first introduced in \cite{Polyak-CMMP64},
where both $\al_t$ and $\mu_t$ are fixed constants.
In the present context, the objective is to solve the equation
$\gJ(\bth) = \bz$ using noisy measurements of the gradient.
In this more general setting, the formulation of SHB is
\be\label{eq:124}
\bth_{t+1} = \bth_t + \mu_t ( \bth_t - \bth_{t-1}) - \al_t \h_{t+1} ,
\ee
where $\al_t$ is the step size, $\mu_t$ is the momentum parameter,
and $\h_{t+1}$ is a random approximation to $\gJt$.
To put \eqref{eq:124} in the form \eqref{eq:121}--\eqref{eq:122}, define
\be\label{eq:125}
\vbold_t := \bth_t - \bth_{t-1} , \w_t = \bth_t .
\ee
With these definitions, it is easy to show that the update equations
for $\w_t$ and $\vbold_t$ are
\be\label{eq:126}
\w_{t+1} = \w_t + \mu_t \vbold_t - \al_t \h_{t+1} ,
\ee
\be\label{eq:127}
\vbold_{t+1} = \mu_t \vbold_t - \al_t \h_{t+1} .
\ee
See for example \cite{Sebbouh-et-al-CoLT21}.
These equations are of the form \eqref{eq:121}--\eqref{eq:122} if we define
\bd
\e_t = 0 , a_t = \mu_t , b_t = 1 .
\ed

The Nesterov Accelerated Gradient (NSG) algorithm was introduced in
\cite{Nesterov-Dokl83}.
In the current notation, with the possibility of the gradient being
stochastic, and the momentum coefficient being allowed to vary with $t$,
it can be stated as follows (following \cite[Eqs. (3)--(4)]
{Sutskever-et-al-ICML13}):
\be\label{eq:128}
\bth_{t+1} = \bth_t + \mu_t ( \bth_t - \bth_{t-1} ) - \al_t \h_{t+1} ,
\ee
where $\h_{t+1}$ is a random approximation of
$\gJ( \bth_t + \mu_t ( \bth_t - \bth_{t-1} ))$, and not $\gJt$.
We analyze \eqref{eq:128} using the reformulation in
\cite[Eqs.\ (6)--(7)]{Bengio-et-al-ICASSP13}.
To accommodate the shift in the argument of $\gJ(\cdot)$, we proceed
as follows:
Define 
\be\label{eq:129}
\vbold_t = \bth_t - \bth_{t-1} , \w_t = \bth_t + \mu_t \vbold_t .
\ee
Then the updating formulas are given by 
\cite[Eqs.\ (6)--(7)]{Bengio-et-al-ICASSP13} as
\be\label{eq:1210}
\vbold_{t+1} = \mu_t - \al_t \h_{t+1} ,
\ee
which is the same as \eqref{eq:127}, and
\be\label{eq:1211}
\w_{t+1} = \w_t + \mu_{t+1} \mu_t \vbold_t - (1 + \mu_{t+1}) \al_t \h_{t+1} .
\ee
These equations are of the form \eqref{eq:121} and \eqref{eq:122} with
\be\label{eq:1212}
\e_t = 1 \fa t , a_t = \mu_{t+1} \mu_t , b_t = 1 + \mu_{t+1} .
\ee

Finally, since SGD is a special case of SHB with $\mu_t \equiv 0$ for all $t$,
it too is a special case of the general algorithm
\eqref{eq:121}--\eqref{eq:122}.

\section{Literature Review}\label{sec:Review}

In this section, we present a very brief review of the relevant
results from the literature on SHB and SNAG, and compare the results
in this paper against those.
Because the area of optimization is vast, our comparison is restricted
only to those papers that study the convergence of the SHB or SNAG algorithms.
A more detailed review of momentum-based algorithms is given in 
\cite[Section 1.1]{MV-TUKR-arxiv25}.
We begin with some general remarks on various approaches, and then come to
the papers that are the most relevant to our paper.

\subsection{General References}\label{ssec:31}

The Heavy Ball (HB) algorithm is introduced in \cite{Polyak-CMMP64}
for the minimization of a strictly convex objective function $J(\cdot)$
mapping $\R^d$ into $\R$.
Suppose the condition number of the Hessian $\nabla^2 J(\bth)$
(that is, the ratio of the largest to the smallest eigenvalues)
is bounded by $R$ as a function of $\bth$.
Moreover, let $J^*$ denote the global minimum of $J(\cdot)$.
Then the HB method can be shown to converge at the rate of
$\exp(-2 \sqrt{(R-1)/(R+1)} t)$.
This is faster than the rate of $\exp(-2(R-1)/(R+1) t)$ for
steepest descent (GD).
The Nesterov Accelerated Gradient (NAG) method is introduced in
\cite{Nesterov-Dokl83}.
Subsequent research has shown that (i) for a convex function $J(\cdot)$
with $\gJ(\cdot)$ being Lipschitz-continuous, the iterations converge
to the optimal value at a rate of $O(t^{-2})$, compared to $O(t^{-1})$
for GD.
(ii) Moreover, no method can achieve a faster rate.

After the publication of these two seminal papers, a great deal
of analysis was carried out on these algorithms.
We mention only a small part of the literature that is directly relevant
to the present paper.
A fruitful approach to analyzing the behavior of momentum-based
algorithms such as HB and NAG is to study their
associated ODEs on $\R^d$.
Whereas the ODE associated with GD is of first-order, the ODEs
associated with momentum-based methods are of second-order, due
to the presence of the ``delay'' terms.
The ODE associated with NAG is analyzed in \cite{Su-Boyd-Candes16}, when
the step size $\al$ is held constant, while the momentum coefficient
$\mu_t$ varies with time.
It is shown that the ``optimal'' schedule for $\mu_t$ is
$\mu_t = (t+2)/(t+5)$.
In \cite{Aujol-et-al-SIAMO19}, the rate of convergence of this ODE
is analyzed further by imposing additional structure on $J(\cdot)$,
such as the Kurdyka-{\L}ojasiewicz property\footnote{This property
and the stronger
Polyak-{\L}ojasiewicz property are defined in Section \ref{ssec:21}.}
It is shown that, in certain situations, it is possible for classical
steepest descent method to outperform NAG.
The second-order ODE associated with HB 
is analyzed in \cite{Apido-et-al-JGO22,Aujol-et-al-MP23},
when $J(\cdot)$ satisfies the Polyak-{\L}ojasiewicz property.
In all of the above formulations, it is assumed that the ``stochastic
gradient'' $\h_{t+1}$
equals the true gradient $\gJt$; thus these models do not allow
for measurement errors.
Hence the analysis applies only to HB or NAG, not SHB or SNAG.

Now we come to more recent research on HB.
In much of the literature, attention is focused in the \textit{convergence
in expectation}, or \textit{convrgence in probability} of various algorithms.
In the review paper \cite{Bottou-et-al-SIAM18},
the emphasis is almost exclusively on convergence in expectation.
SHB and SNAG are discussed in \cite[Section 7]{Bottou-et-al-SIAM18}.
Other research on the convergence of HB
(without establishing almost sure convergence)
is summarized very well on page 3 of \cite{Sebbouh-et-al-CoLT21}
and Section 1.1 of \cite{Liu-Yuan-arxiv22}.

In \cite{Ghadimi-et-al-ECC15}, the authors analyze the HB algorithm
where $\h_{t+1} = \gJt$; thus there is no provision for measurement noise,
so that the algorithm being analyzed is HB and not SHB.
The function $J(\cdot)$ is assumed to be convex, and to have a globally
Lipschitz-continuous gradient.
The authors \textit{do not} show that $J(\bth_t)$ converges to the global
minimum of $J(\cdot)$.
Rather, they show that \textit{the average of the first $t$ iterations}
converges to the minimum value of the function $J(\cdot)$.
In \cite{Gadat-et-al-EJS18}, the authors study the SHB for
some classes of nonconvex functions.
It is assumed that the stochastic gradient is unbiased, i.e., that
$E_t(\h_{t+1}) = \gJt$, so that $\x_t = \bz$ for all $t$.
The iterations are shown to converge to a minimum,
but at the cost of ``uniformly elliptic bounds'' on the measurement error
$\bzt$, which are very restrictive.

\subsection{Specific References}\label{ssec:32}

Now we discuss in detail
a few papers that are most closely related to the present paper.
Note that any stochastic algorithm generates \textit{one sample path}
of a stochastic process.
Thus it is very useful to know that the algorithm converges
to the desired limit \textit{almost surely}.
However, there are only a handful of papers that establish the
almost-sure convergence of SHB and/or SNAG.
These are discussed in detail in this subsection.
We also discuss another paper that presents a ``stochastic unified
momentum'' algorithm.

In \cite{Sebbouh-et-al-CoLT21}, the objective function is an expected
value, of the form (\cite[Eq.\ (1)]{Sebbouh-et-al-CoLT21})
\bd
J(\bth) = E_{\w \sim P} F(\bth,\w) .
\ed
The function $F(\cdot,\w)$ is convex for each $\w$, and its gradient
is Lipschitz-continuous with constant $L_\w \leq L$ for all $\w$.
Thus the same holds for $J(\cdot)$ as well.
The stochastic gradient is chosen as (\cite[Eq.\ (SHB)]{Sebbouh-et-al-CoLT21})
\bd
\h_{t+1} = \nabla_{\bth_t} F(\w_{t+1},\bth_t) ,
\ed
where $\w_{t+1}$ is chosen i.i.d.\ with distribution $P$.
Effectively this means that
the stochastic gradient is unbiased.
Also, it is assumed that, for some constant $\s^2$, the conditional variance
$CV_t(\h_{t+1})$ of the stochastic gradient is bounded by
(\cite[Eq.\ (5)]{Sebbouh-et-al-CoLT21})
\bd
CV_t(\h_{t+1}) \leq 4L (J(\bth_t) - J^* ) + \s^2 ,
\ed
where $J^*$ is the infimum of $J(\cdot)$.
In \cite{Sebbouh-et-al-CoLT21}
the authors study the SHB with time-varying parameter $\mu_t$, namely
\be\label{eq:313}
\bth_{t+1} = \bth_t - \al_t \h_{t+1} + \mu_t (\bth_t - \bth_{t-1}) ,
\ee
It is suggested how to convert \eqref{eq:313} above into
\textit{two} equations, which do not contain any ``delayed'' terms.
Specifically, the authors iteratively define
\be\label{eq:314}
\la_{t+1} = \frac{\la_t}{\mu_t} - 1 ,
\eta_t = (1 + \la_{t+1}) \al_t 
\ee
In the above, the quantity $\la_0$ is not specified and is chosen by the user.
They then define\footnote{To facilitate a comparison with the original paper,
we use the same symbol $\w_t$.
However, their quantity $\w_t$ is closer to our $\ubold_t$ defined
in Section \ref{ssec:41}.}
\be\label{eq:315}
\w_{t+1} = \w_t - \eta_t \h_{t+1} ,
\ee
\be\label{eq:316}
\bth_{t+1} = \frac{ \la_{t+1} }{ 1 + \la_{t+1} } \bth_t 
+ \frac{1}{1 + \la_{t+1} } \w_{t+1} .
\ee
Then $\bth_{t+1}$ satisfies \eqref{eq:313}.

The convergence of \eqref{eq:315}--\eqref{eq:316} is established under
\cite[Condition 1]{Sebbouh-et-al-CoLT21}, namely the sequence
$\{ \eta_t \}$ is decreasing, and moreover
\be\label{eq:311}
\sum_{t=0}^\infty \eta_t = \infty , \quad
\sum_{t=0}^\infty \eta_t^2 \s^2 < \infty , \quad
\sum_{t=1}^\infty \frac{\eta_t}{\sum_{\tau = 0}^{t-1} \eta_\tau } 
= \infty .
\ee
Thus in \cite{Sebbouh-et-al-CoLT21} the original step size sequence
$\{ \al_t \}$ and momentum sequence $\{ \mu_t \}$ are replaced by
the ``synthetic'' step size sequence $\{ \eta_t \}$, and the
convergence conditions are stated in terms of $\eta_t$.
It is shown that, in general, $J(\bth_t) \ap J^*$ where $J^*$ is
the minimum value of $J(\cdot)$, at a rate of $O(t^{-1/2})$.
In the ``over-parametrized'' case, the rate improves to $O(t^{-1})$.
Moreover, the iterations $\bth_t$ converge to a minimizer of $J(\cdot)$.

Now we give our interpretation of the results in \cite{Sebbouh-et-al-CoLT21}.
There are two restrictive features of these results.
First, the conditions \eqref{eq:311} are more stringent than the standard
Robbins-Monro conditions, namely
\be\label{eq:312}
\sum_{t=0}^\infty \eta_t^2 < \infty , \quad
\sum_{t=0}^\infty \eta_t = \infty,
\ee
Compared to \eqref{eq:312},
there are two extra assumptions in \eqref{eq:311}, namely: (i)
the synthetic step size $\eta_t$ is decreasing, and
 (ii) the summation of $\eta_t/\sum_{\tau = 0}^{t-1} \eta_\tau$
is divergent.
Since $S_t$ is an increasing sequence, the divergence of this summation
is a more restrictive assumption than the second Robbins-Monro condition
in \eqref{eq:312}

The second challenge in this approach is that, given the \textit{original}
step size and momentum sequences, there is no easy way to verify whether
\eqref{eq:311} is satisfied.
This is why, in \cite[Theorem 8]{Sebbouh-et-al-CoLT21}, the authors
\textit{begin with} the sequence $\{ \eta_t \}$, which appears to us to
be somewhat unnatural.
If $\mu_t = \mu$, a fixed constant, for all $t$,
then a possible solution to \eqref{eq:314} is
\bd
\la_t = \la_0 = \frac{\mu}{1 - \mu}, \fa t ,
\eta_t = \frac{1}{1-\mu} \al_t , \fa t .
\ed
Since $\eta_t$ is a constant multiple of $\al_t$, if
$\{ \al_t \}$ satisfies \eqref{eq:312}, then so does $\{ \eta_t \}$.
However, if $\mu_t$ varies as a function of $t$, this approach
will not work.
Specifically, it is shown in the Appendix that, if the momentum
coefficient $\mu_t$ is monotonically decreasing, then
$\la_t \ap \infty$ as $t \ap \infty$.
Consequently, a Robbins-Monro like assumpion such as
$\sum_{t=0}^\infty \al_t^2 < \infty$ \textit{need not imply} that
$\sum_{t=0}^\infty \eta_t^2 < \infty$.
In the other direction, if $\mu_t$ is monotonically increasing but
bounded away from $1$, them there exsts a finite $T$ such that
$1 + \la_{t+1} < 0$ for all $t \geq T$, thus causing the ``step size''
$\eta_t$ to become negative, which is absurd.

In contrast, the approach proposed here can handle the case where
not just the momentum parameter $\mu_t$ is time-varying, but 
\textit{all} parameters vary with $t$.
Moreover, the conditions for convergence reduce to the familiar Robbins-Monro
conditions if the stochastic gradient is unbiased and has finite variance
(even if the parameters vary with $t$).
In the more general case where the stochastic gradient is biased, and/or
the conditional variance of the stochastic gradient grows without bound
as a function of $t$, the conditions for convergence are gneralizations
of the Kiefer-Wolfowitz-Blum conditions.
This formulation allows us to handle the so-called zeroth-order methods,
wherein the stochastic gradient is computed using only noisy measurements
of the objective function.

Next we come to \cite{Liu-Yuan-arxiv22}.
The analysis in \cite{Sebbouh-et-al-CoLT21} is applicable only to
\textit{convex} objective functions.
In \cite{Liu-Yuan-arxiv22}, the authors prove results that are applicable
to arbitrary nonconvex functions that have a Lipschitz-continuous gradient.
They also relax the bound on the conditional variance of the stochastic
gradient to the so-called Expected Smoothness assumption of 
\cite{Khaled-Rich-arxiv20}, namely
\be\label{eq:317}
E_t( \nmeusq{\h_{t+1}}) \leq 2 A J(\bth_t) + B \nmeusq{\gJt} + C ,
\ee
for suitable constants $A, B, C$.
This is proposed in \cite{Khaled-Rich-arxiv20}
as ``the weakest assumption'' for analyzing
the convergence of SGD or SHB for nonconvex functions.
However, unlike in \cite{Sebbouh-et-al-CoLT21}, these authors assume that
the momentum term is a constant, that is, $\mu_t = \mu \fa t$.
Naturally, in this very general setting, one cannot expect to show
that the iterations converge almost surely to the set of minimizers.
Instead, the authors show that
\be\label{eq:318}
\lim_{\tai} \min_{0 \leq \tau \leq t} \nmeusq{\gJ(\bth_\tau)} = 0 .
\ee
Further, if the step size is chosen as $\al_t = c/(t^{0.5+\e})$
for some $\e \in (0,0.5)$, then
\be\label{eq:318a}
\min_{0 \leq \tau \leq t} \nmeusq{\gJ(\bth_\tau)} =
o \left( \frac{1}{t^{0.5-\e} } \right) .
\ee

Now we compare our results to those of \cite{Liu-Yuan-arxiv22}.
Throughout, we replace
the variance bound \eqref{eq:317} by weaker bound \eqref{eq:2110}.
We also permit the momentum parameter $\mu_t$ to vary with $t$, which
is not possible in the method of proof used in \cite{Liu-Yuan-arxiv22}.
When no convexity of any type is assumed, and the only assumption
is that $\gJ(\cdot)$ is Lipschitz-continuous, we are able to show that
\be\label{eq:318b}
\liminf_{\tai} \nmeu{\gJt} = 0.
\ee
Given any sequence of nonnegative numbers $\{ x_t \}$, it is easy to 
show that
\bd
\liminf_{\tai} x_t = 0 \imp \lim_{\tai} \min_{0 \leq \tau \leq t} x_\tau  = 0 ,
\ed
but the converse need not be true. 
(Suppose $x_T = 0$ for some $T$ but $x_t \geq \e > 0$ for all $t > T$.)
Hence our conclusion \eqref{eq:318b} is stronger than \eqref{eq:318a}.
Next, we permit a mild form of nonconvexity (namely the KL or PL properties).
In this more general setting, we nevertheless derive the almost sure
convergence of the iterations, when the Robbins-Monro or Kiefer-Wolfowitz-Blum
conditions are satisfied.

Next we come to \cite{Liu-et-al-NC23} and its predecessors
\cite{Zou-et-al-arxiv18,Yan-et-al-IJCAI18} that present a ``Stochastic
Unified Momentum (SUM)'' algorithm.
In the paper \cite{Liu-et-al-NC23}, the objective function
is of the same form as in (\cite[Eq.\ (1)]{Sebbouh-et-al-CoLT21}), namely
\bd
J(\bth) = E_{\w \sim P} F(\bth,\w) .
\ed
The  SUM algorithm consists of two coupled equations (in their notation):
\bd
m_t = \mu m_{t-1} - \eta_t g_t , \quad
x_{t+1} = x_t - \la \eta_t g_t + (1 - \la) m_t .
\ed
Other than the fact that the momentum coefficient $\mu$ is constant,
the only difference between the above, and \eqref{eq:122}--\eqref{eq:123},
is that the above has a ``convex combination'' of two terms, which is
absent in our formulation.
But this is a minor detail.
Hence it is not claimed that our unified algorithm itself is more general.
Rather, the generality is in the assumptions.
In \cite{Liu-et-al-NC23}, it is assumed that the stochastic gradient is
unbiased and has uniformly bounded variance, whereas we permit a more
general type of stochastic gradient, which satisfies
\eqref{eq:219}--\eqref{eq:2110}.
Our conclusions are also stronger.
Under the Robbins-Monro or Kiefer-Wolfowitz-Blum conditions,
when $J(\cdot)$ satisfies the (KL) property, 
we deduce that $\bth_t$ converges almost surely to the set of minimizers.
When $J(\cdot)$ satisfies the stronger (PL) property, we can bound
the rate of convergence.
Finally, if the only assumption is that $\gJ(\cdot)$ is Lipschitz-continuous,
we are able to show that
\bd
\liminf_{\tai} \nmeu{\gJt} = 0.
\ed
In contrast, in \cite{Liu-et-al-NC23}, the authors show only that
\bd
\lim_{\tai} \min_{1 \leq \tau \leq t} E[ \nmeusq{\gJ(\bth_\tau)} = 0 .
\ed
This is a weaker conclusion, as already discussed.

Finally we come to the closely related paper \cite{MV-TUKR-arxiv25},
for which the present author is a coauthor.
In a nutshell, the present paper extends \cite{MV-TUKR-arxiv25}
in several ways.
The salient points of difference between the present paper and
\cite{MV-TUKR-arxiv25} are the following:
\bit
\item In the present paper, we present a ``unified'' algorithm that
includes both the SHB and the SNAG as special cases.
In contrast, in \cite{MV-TUKR-arxiv25}, the focus is on the SHB algorithm
alone.
\item In the present paper, the momentum parameter is allowed to be
a function of time (the iteration counter), whereas it is assumed
to be a constant in \cite{MV-TUKR-arxiv25}.
Thus one contribution of this paper is to adapt the ``transformation
of variables'' approach in \cite{MV-TUKR-arxiv25} to the case where
the momemtum parameter varies with time.
When the momentum parameter is constant, the approach in 
\cite{MV-TUKR-arxiv25} leads to two completely decoupled updating equations.
A contribution of this paper is to show that it is good enough if the
transformed equations are ``asymptotically'' decoupled.
\item In the present paper, the approach proposed
in \cite{Sebbouh-et-al-CoLT21} is analyzed in detail;
it is shown that the approach in \cite{Sebbouh-et-al-CoLT21}
leads to considerably stronger
sufficient conditions than in the present paper.
The paper \cite{MV-TUKR-arxiv25} does not attempt to analyze
\cite{Sebbouh-et-al-CoLT21}.
\eit

\section{Main Results}\label{sec:Main}

In this section we state the main results of the paper.
The proofs are deferred to Section \ref{sec:Proofs}.

\subsection{Various Assumptions}\label{ssec:21}

In this subsection, we state the various assumptions made in order to
prove the main results.
They are of three kinds: On the objective function, on the stochastic
gradient, and on the various constants.

\subsubsection{Assumptions on the Objective Function}\label{sssec:211}

We begin with two ``standing'' assumptions on the objective function
$J(\cdot)$ and its gradient $\gJ(\cdot)$.
Note that these assumptions are made in almost every paper in this area.
\ben
\item[(S1)] $J(\cdot)$ is $\C^1$, and $\gJ(\cdot)$ is globally 
Lipschitz-continuous with constant $L$.
\item[(S2)] $J(\cdot)$ is bounded below, and the infimum is attained.
Thus
\bd
J^* := \inf_{\bth \in \R^d} J(\bth) > - \infty .
\ed
Further, the set $S_J$ defined by
\be\label{eq:214}
S_J := \{ \bth : J(\bth) = 0 \}
\ee
is nonempty.
\een

Next we present a useful consequence of Assumptions (S1)
and (S2).
This is the same as \cite[Lemma 1]{MV-RLK-SGD-JOTA24} and the proof
can be found therein.
Note that the validity of the lemma for \textit{convex} functions
is established in \cite[Theorem 2.1.5]{Nesterov04}.
See specifically Eq.\ (2.1.6) therein, which is shown to be equivalent
to convexity.
Thus the advantage of Lemma \ref{lemma:21} is that the
bound \eqref{eq:211} holds even when $J(\cdot)$ is not convex.

\begin{lemma}\label{lemma:21}
Suppose (S1) and (S2) hold.\footnote{Note that the result holds
whenever $J(\cdot)$ is bounded below, even if the infimum is not attained.
The proof can be found in \cite[Lemma 1]{MV-RLK-SGD-JOTA24}.}
Then
\be\label{eq:211}
\nmeusq{ \nabla J(\bth) } \leq 2L J(\bth) .
\ee
\end{lemma}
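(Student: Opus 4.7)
The plan is to derive the bound from the standard quadratic upper model provided by Lipschitz continuity of $\gJ(\cdot)$, combined with the lower bound on $J(\cdot)$ from (S2). Throughout, I will treat the hypothesis that $S_J := \{ \bth : J(\bth) = 0 \}$ is nonempty (together with (S2)) as implying that $J^* = 0$, so that $J(\bth) \geq 0$ for all $\bth$; otherwise the claimed inequality would be vacuous or false for $\bth$ at which $J$ is negative.

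First I would invoke the \textbf{descent lemma}: since $\gJ$ is globally Lipschitz with constant $L$ by (S1), one has, for all $\bth, \bphi \in \R^d$,
\be\label{eq:planA}
J(\bphi) \;\leq\; J(\bth) + \IP{\gJ(\bth)}{\bphi - \bth} + \frac{L}{2} \nmeusq{\bphi - \bth} .
\ee
This is obtained in the usual way by writing $J(\bphi) - J(\bth) = \int_0^1 \IP{\gJ(\bth + s(\bphi - \bth))}{\bphi - \bth} \, ds$ and then adding and subtracting $\IP{\gJ(\bth)}{\bphi - \bth}$ before applying Cauchy--Schwarz and the Lipschitz bound on the resulting integral.

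Next, I would fix $\bth$ and specialize \eqref{eq:planA} by choosing $\bphi$ to minimize the right-hand side in closed form, namely $\bphi = \bth - (1/L) \gJ(\bth)$. Substituting this into \eqref{eq:planA} produces
\be\label{eq:planB}
J(\bphi) \;\leq\; J(\bth) - \frac{1}{2L} \nmeusq{\gJ(\bth)} .
\ee
Finally, invoking (S2) in the normalized form $J(\bphi) \geq J^* = 0$ and rearranging yields $\nmeusq{\gJ(\bth)} \leq 2L\, J(\bth)$, which is exactly \eqref{eq:211}.

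There is no real obstacle: the argument is a two-line consequence of the descent lemma plus the lower bound on $J$. The only subtlety worth flagging is the convention $J^* = 0$; if the paper instead intends $S_J$ to denote the set of stationary points $\{\bth : \gJ(\bth) = \bz\}$ rather than zeros of $J$, then the statement as written still requires $J^* = 0$ (or else the right-hand side should read $2L(J(\bth) - J^*)$), and the same proof goes through unchanged with the obvious modification.
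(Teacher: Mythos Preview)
Your argument is correct and is precisely the standard proof: apply the descent lemma (which the paper records separately as Lemma~\ref{lemma:42}) at the step $\bphi = \bth - (1/L)\gJ(\bth)$, then invoke the lower bound $J(\bphi) \geq J^* = 0$. The paper does not give its own proof here but simply cites \cite[Lemma~1]{MV-RLK-SGD-JOTA24}, where exactly this computation appears; your remark about the implicit normalization $J^* = 0$ is also on point.
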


We also require a very useful bound, namely \cite[Eq.\ (2.4)]{Ber-Tsi-SIAM00}.
This lemma is used repeatedly in the sequel.
Note that if $J(\cdot)$ belongs to $\C^2$, then the
lemma is a ready consequence of Taylor's theorem.
The advantage of Lemma \ref{lemma:42} is that $J(\cdot)$ is assumed
only to belong to $\C^1$, but with $\gJ(\cdot)$ being globally 
Lipschitz-continuous, which implies that $\gJ(\cdot)$ is absolutely
continuous and thus differentiable \textit{almost} everywhere.
This is a somewhat slight, but useful, relaxation.

\begin{lemma}\label{lemma:42}
Suppose $J: \R^d$ is $\C^1$, and that $\gJ(\cdot)$ is $L$-Lipschitz
continuous.
Then
\be\label{eq:432a}
J(\bth+\bphi) \leq J(\bth) + \IP{\gJ(\bth)}{\bphi} + \frac{L}{2} 
\nmeusq{\bphi} , \fa \bth , \bphi \in \R^d .
\ee
\end{lemma}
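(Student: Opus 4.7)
The plan is to reduce the multivariable inequality to a one-dimensional statement via the standard trick of restricting $J$ to the line segment joining $\bth$ and $\bth + \bphi$, and then apply the fundamental theorem of calculus. Define $g: [0,1] \ap \R$ by $g(t) = J(\bth + t \bphi)$. Since $J$ is $\C^1$, the chain rule gives $g'(t) = \IP{\gJ(\bth + t \bphi)}{\bphi}$, and $g'$ is continuous on $[0,1]$, so the fundamental theorem of calculus yields
\[
J(\bth + \bphi) - J(\bth) \;=\; g(1) - g(0) \;=\; \int_0^1 \IP{\gJ(\bth + t \bphi)}{\bphi} \, dt .
\]

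Next I would split off the linear part by adding and subtracting $\IP{\gJ(\bth)}{\bphi}$ inside the integrand, obtaining
\[
J(\bth + \bphi) - J(\bth) - \IP{\gJ(\bth)}{\bphi} \;=\; \int_0^1 \IP{\gJ(\bth + t \bphi) - \gJ(\bth)}{\bphi} \, dt .
\]
Then I would bound the integrand using the Cauchy-Schwarz inequality followed by the $L$-Lipschitz hypothesis on $\gJ(\cdot)$:
\[
\bigl| \IP{\gJ(\bth + t \bphi) - \gJ(\bth)}{\bphi} \bigr| \;\leq\; \nmeu{\gJ(\bth + t \bphi) - \gJ(\bth)} \cdot \nmeu{\bphi} \;\leq\; L t \, \nmeusq{\bphi} .
\]
Integrating this bound over $t \in [0,1]$ gives $\int_0^1 L t \, \nmeusq{\bphi} \, dt = \frac{L}{2} \nmeusq{\bphi}$, which delivers the stated inequality after rearrangement.

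There is no real obstacle here — the result is the classical ``descent lemma.'' The only conceptual point worth flagging (which the excerpt already emphasizes) is that although $\gJ(\cdot)$ is only assumed Lipschitz and therefore might not be differentiable, we never differentiate $\gJ$: we only use continuity of $g'(t)$ (assured by $\C^1$-ness of $J$) to invoke the fundamental theorem, and we only use Lipschitzness of $\gJ$ to bound the increment $\gJ(\bth + t\bphi) - \gJ(\bth)$. Hence the argument goes through under exactly the hypotheses stated, without any appeal to Taylor's theorem for $\C^2$ functions.
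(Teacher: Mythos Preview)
Your proof is correct and is the standard argument for the descent lemma. The paper does not actually prove this result: it simply cites \cite[Eq.\ (2.4)]{Ber-Tsi-SIAM00} and remarks that the $\C^1$-plus-Lipschitz hypothesis suffices, which is exactly the point you make in your closing paragraph.
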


Next we introduce three properties (or assumptions), known respectively as
the Polyak-{\L}ojasiewicz (PL) property, 
the (KL') property, which is a slight relaxation
of the well-known Kurdyka-{\L}ojasiewicz property,
and the (NSC) property.
As a prelude, we define the class of functions of Class $\B$.
Again, all of this material can be found in 
\cite[Section 4]{MV-RLK-SGD-JOTA24}.

\begin{definition}\label{def:Class-B}
A function $\eta : \R_+ \ap \R_+$ is
said to \textbf{belong to Class $\B$} if $\eta(0) = 0$, and in addition,
for arbitrary real numbers $0 < \e \leq M$,
it is true that
\bd
\inf_{\e \leq r \leq M} \eta(r) > 0 .
\ed
\end{definition}

Using this concept, we introduce a few other properties on $J(\cdot)$.

\ben
\item[(PL)] There exists a constant $K$ such that
\be\label{eq:212}
\nmeusq{\gJ(\bth)} \geq K J(\bth) , \fa \bth \in \R^d .
\ee
\item[(KL')] There exists a function $\psi(\cdot)$ of Class $\B$
such that
\be\label{eq:213}
\nmeu{\gJ(\bth)} \geq \psi(J(\bth)) , \fa \bth \in \R^d .
\ee
\item[(NSC)]
This property consists of the following assumptions, taken together.
\ben
\item
The function $J(\cdot)$ has compact level sets.
For every constant $c \in (0,\infty)$, the level set
\bd
L_J(c) := \{ \bth \in \R^d : J(\bth) \leq c \}
\ed
is compact.
\item
There exists a number $r > 0$ and a continuous function
$\eta : [0,r] \ap \R_+$ such that $\eta(0) = 0$, and
\be\label{eq:215}
\rho(\bth) \leq \eta(J(\bth)) , \fa \bth \in L_J(r) .
\ee
\een
\een

PL stands for the Polyak-{\L}ojasiewicz condition.
In \cite{Polyak-UCMMP63}, Polyak introduced \eqref{eq:212},
and showed that it is sufficient to ensure that iterations converge at
a ``linear'' (or geometric) rate to a global minimum, whether or not
$J(\cdot)$ is convex.
The property (KL') is weaker than (PL).
A good review of these properties can be found in \cite{Karimi-et-al16}.
(NSC) stands for ``nearly strongly convex.''
Every strongly convex function satisfies this property, but so do others.
It is obvious that, if (NSC) is satisfied, then $J(\bth_t) \ap 0$ as $\tai$
implies that $\rho(\bth_t) \ap 0$ as $\tai$.

\subsubsection{Assumptions on the Stochastic Gradient}\label{sssec:212}

Let $\F_t$ denote the $\s$-algebra generated by $\bth_0 , \h_1^t$,
where $\h_1^t$ denotes $(\h_1 , \cdots \h_t)$;
note that there is no $\h_0$.
For an $\R^d$-valued
 random variable $X$, let $E_t(X)$ denote the \textbf{conditional
expectation} $E(X | \F_t)$, and let $CV_t(X)$ denote its 
\textbf{conditional variance} defined by\footnote{See 
\cite{Williams91,Durrett19} for relevant background on stochastic processes.}
\be\label{eq:216}
CV_t(X) = E_t( \nmeusq{ X - E_t(X)} ) = E_t(\nmeusq{X}) - \nmeusq{E_t(X)} .
\ee

With these notational conventions in place we state the assumptions
on $\h_{t+1}$.
We begin by defining
\be\label{eq:217}
\z_t = E_t(\h_{t+1}) , \quad
\x_t = \z_t - \gJw ,\quad
\bzt = \h_{t+1} - \z_t .
\ee
Thus $\x_t$ denotes the ``bias'' of the stochastic gradient.
If $\h_{t+1}$ is an unbiased estimate of $\gJw$, then $\x_t = \bz$.
Most papers in the literature assume that $\x_t = \bz$, but
our objective here is specifically to \textit{permit} biased estimates.
This is necessary to analyze the situation where the stochastic gradient
is obtained using function valuations alone.
The last equation in \eqref{eq:217} implies that $E_t(\bzt) = \bz$.
Therefore 
\be\label{eq:218}
E_t( \nmeusq{\h_{t+1}} ) = \nmeusq{\z_t} + E_t ( \nmeusq{\bzt} ) .
\ee

With these definitions, the assumption on the stochastic gradient is that
there exist sequences of constants $\{ B_t \}$ and $\{ M_t \}$ such that
\be\label{eq:219}
\nmeu{\x_t} \leq B_t [ 1 + \nmeu{\gJw} ] , \fa \bth_t \in \R^d , \fa t ,
\ee
\be\label{eq:2110}
E_t ( \nmeusq{\bzt}) \leq M_t^2 [ 1 + J(\w_t) ] ,
\fa \bth_t \in \R^d , \fa t .
\ee
Equation \eqref{eq:219} states that the stochastic gradient $\h_{t+1}$
can be biased, but the extent of the bias has to be bounded by a constant
plus the norm of the gradient.
As we will see in subsequent sections, while $B_t$ is permitted to be nonzero,
eventually it has to approach zero; in other words, the stochastic gradient
has to be ``asymptotically unbiased.''
In contrast, \eqref{eq:2110} states that the conditional variance of
the stochastic gradient can grow as a function of the iteration counter $t$.
This feature is essential to permit the analysis of so-called zeroth-order
methods, where only a small number (often just two) of
function evaluations are used to construct $\h_{t+1}$.

\subsubsection{Assumptions on the Constants}\label{sssec:213}

Aside from the step length $\al_t$, there are four constants
in the algorithm \eqref{eq:121}--\eqref{eq:122}.
The assumptions on these constants are as follows:
There exist constants $\ab,\bul,\bb,\mub,\ebar$ such that, for all $t$, we have
\be\label{eq:2111}
0 \leq a_t \leq \ab , 0 < \bul \leq b_t \leq \bb , 0 \leq \mu_t \leq \mub < 1 ,
| \e_t | \leq \ebar < \infty .
\ee
Now we discuss a few implications of the above bounds.
First, $a_t$ is always nonnegative and bounded above.
Second, $b_t$ is bounded both below and above by positive
constants.
Third, the momentum coefficient $\mu_t$ can equal zero, but is bounded
away from $1$.
Finally, $\e_t$ can be either positive or negative, but is bounded in
magnitude.
Observe that when SHB is formulated as a special case of
\eqref{eq:121}--\eqref{eq:122}, the assumptions in \eqref{eq:2111} hold.
As for SNAG, in the \textit{traditional} formulation, the momentum
parameter $\mu_t \uparrow 1$ as $\tai$.
Hence the assumptions in \eqref{eq:2111} do not hold.
What is analyzed here is a nonstandard version of SNAG in which
\eqref{eq:2111} hold.
The version of SNAG analyzed in \cite{Liu-Yuan-arxiv22}
is even more restrictive in that $\mu_t$ is a fixed constant less than one.

Two ready consequences of these assumptions are that, if we define
\be\label{eq:2111a}
k_t := \frac{a_t}{(1-\mu_t)} , \kb := \frac{\ab}{1 - \mub} ,
\ee
then
\be\label{eq:2112}
k_t \in [0,\kb] , b_t + k_{t+1} \in [\bul , \bb+ \ab/(1-\mub)] .
\ee

A key assumption is this:
Define $\de_t := k_{t+1} - k_t$.
Then
\be\label{eq:2113}
\de_t \ap 0 \mbox{ as } \tai .
\ee
Note that there are no restrictions on the \textit{sign} of $\de_t$.
This assumption is readily satisfied if both $\{ a_t \}$ and $\{ \mu_t \}$
converge to some limits.
The assumption allows us to transform the variables
in \eqref{eq:121}--\eqref{eq:122} in such a way that the resulting
transformed equations are ``asymptotically decoupled.''
More details can be found in Section \ref{ssec:41}.

In our analysis, it is quite permissible to allow \textit{all five}
constants $a_t$, $ b_t$, $ \e_t $, $ \mu_t $, $ \al_t$ to be random variables.
In this case, the bounds in \eqref{eq:2111} and \eqref{eq:2112}
hold almost surely.
If we define $\F_t$ to be the $\s$-algebra generated by $\bth_0$ and
$\h_1^t$, then all of these constants need to belong to $\M(\F_t)$,
the set of random variables that are measurable with respect to $\F_t$.
In particular, in \eqref{eq:1212}, we see that $\e_t = \mu_{t+1} \mu_t$.
Thus, in order to incorporate the approach of \cite{Bengio-et-al-ICASSP13}
in the present framework,
we must assume that $\mu_{t+1} \in \M(\F_t)$, i.e., that $\{ \mu_t \}$
is a \textbf{predictable} process.

\subsection{Main Theorems with Full Coordinate Updating}\label{ssec:22}

In this subsection we state the two main theorems regarding the convergence
of the general algorithm \eqref{eq:121} and \eqref{eq:122}),
and several corollaries thereof.
In summary, when the objective function $J(\cdot)$ satisfies the (KL')
property, and the analogs of the Kiefer-Wolfowitz-Blum conditions
are satisfied (see \eqref{eq:221} and \eqref{eq:222} below),
then the algorithm converges almost surely.
If the hypothesis on $J(\cdot)$ is strengthened to (PL) from (KL'),
then we can also derive \textit{bounds} on the rate of convergence.

\begin{theorem}\label{thm:21}
Suppose that the various constants satisfy the assumptions in Section
\ref{sssec:213}, 
while the objective function $J(\cdot)$ satisfies Standing Assumptions
(S1) and (S2) in Section \ref{sssec:211}.
Further, suppose the stochastic gradient $\h_{t+1}$ satisfies the
assumptions \eqref{eq:219}--\eqref{eq:2110} in Section \ref{sssec:212}.
With these assumptions, we can state the following:
\ben
\item Suppose
\be\label{eq:221}
\sum_{t=0}^\infty \al_t^2 < \infty , \quad 
\sum_{t=0}^\infty \al_t B_t  < \infty , \quad 
\sum_{t=0}^\infty \al_t^2 M_t^2 < \infty .
\ee
Then $\{ \gJ(\bth_t) \}$ and $\{ J(\bth_t) \}$ are bounded, and in addition,
$J(\bth_t)$ converges almost surely to some random variable as $\tai$.
\item If in addition 
\be\label{eq:222}
\sum_{t=0}^\infty \al_t = \infty ,
\ee 
then
\be\label{eq:222a}
\liminf_{\tai} \nmeu{\gJt} = 0 .
\ee
\item If, in addition to \eqref{eq:221} and \eqref{eq:222}, the function
$J(\cdot)$ satisfies (KL'), 
then $J(\bth_t) \ap 0$ and $\gJ(\bth_t) \ap \bz$ as $\tai$, where both
convergences are in the almost sure sense.
\item Suppose that in addition to (KL'), $J(\cdot)$ also satisfies (NSC),
and that \eqref{eq:221} and \eqref{eq:222} both hold.
Then $\rho(\bth_t) \ap 0$ almost surely as $\tai$.
\een
\end{theorem}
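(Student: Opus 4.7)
The plan is to introduce a change of variables that asymptotically decouples the update equations \eqref{eq:121}--\eqref{eq:122}, then reduce the analysis to a perturbed stochastic gradient descent on the transformed argument, and finally apply a Robbins--Siegmund supermartingale argument followed by the (KL') and (NSC) structure to upgrade the conclusions.

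The first step is to define $\ubold_t := \w_t + k_t \vbold_t$, where $k_t = a_t/(1-\mu_t)$ as in \eqref{eq:2111a}. A direct substitution using \eqref{eq:121}--\eqref{eq:122} gives
\be\label{eq:prop-trans}
\ubold_{t+1} = \ubold_t - (b_t + k_{t+1})\al_t \h_{t+1} + \de_t\, \mu_t\, \vbold_t ,
\ee
where $\de_t = k_{t+1} - k_t \ap 0$ by \eqref{eq:2113}. Thus $\{\ubold_t\}$ evolves like a standard stochastic gradient iteration with effective step $(b_t+k_{t+1})\al_t$, which lies in a bounded positive interval by \eqref{eq:2112}, \emph{plus} a vanishing perturbation driven by $\vbold_t$. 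Separately, iterating \eqref{eq:122} shows $\vbold_t$ is a convolution of $\{\al_\tau\h_{\tau+1}\}$ with a geometrically decaying kernel (since $\mu_t\leq\mub<1$), so $\nmeusq{\vbold_t}$ will be controllable in terms of $\sum \al_\tau^2\nmeusq{\h_{\tau+1}}$.

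Next I would apply Lemma \ref{lemma:42} to $J(\ubold_{t+1})-J(\ubold_t)$ using \eqref{eq:prop-trans}, take the conditional expectation $E_t$, and split $\h_{t+1}$ via \eqref{eq:217} into $\gJ(\w_t) + \x_t + \bzt$. Using \eqref{eq:219}--\eqref{eq:2110}, the bias produces terms bounded by $C \al_t B_t(1 + \nmeu{\gJ(\w_t)})$ and the noise produces terms bounded by $C\al_t^2 M_t^2(1 + J(\w_t))$. To bring everything back to a single argument, I would use Assumption (S1) to compare $\gJ(\w_t)$, $\gJ(\ubold_t)$, and $\gJ(\bth_t)$: since $\w_t = \ubold_t - k_t\vbold_t$ and $\bth_t = \w_t - \e_t\vbold_t$, the differences are $O(\nmeu{\vbold_t})$, which feeds cleanly into the supermartingale inequality provided the $\vbold_t$-cross-terms are absorbed via the AM-GM trick $2|xy|\leq \al_t x^2+\al_t^{-1}y^2$ and summability of $\sum\al_t^2$. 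After this bookkeeping I should arrive at an inequality of Robbins--Siegmund form
\be\label{eq:prop-rs}
E_t[J(\ubold_{t+1})] \leq (1 + \g_t) J(\ubold_t) - c\,\al_t \nmeusq{\gJ(\ubold_t)} + \z_t ,
\ee
with $\sum \g_t,\sum \z_t < \infty$ almost surely, courtesy of the three hypotheses in \eqref{eq:221}. The supermartingale theorem then yields almost sure convergence of $J(\ubold_t)$, hence of $J(\bth_t)$ (using $|J(\bth_t)-J(\ubold_t)|\to 0$), and summability of $\sum\al_t\nmeusq{\gJ(\ubold_t)}$. Part (1) follows, since Lemma \ref{lemma:21} then bounds $\nmeu{\gJ(\bth_t)}$ via boundedness of $J$.

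For part (2), combining $\sum\al_t\nmeusq{\gJ(\ubold_t)}<\infty$ with $\sum\al_t=\infty$ forces $\liminf\nmeu{\gJ(\ubold_t)}=0$, and the Lipschitz comparison with $\gJ(\bth_t)$ (together with $\vbold_t\to 0$ along a suitable subsequence, which also follows from $\sum \al_t^2\nmeusq{\h_{t+1}} < \infty$) upgrades this to \eqref{eq:222a}. For part (3), I would lift $\liminf$ to $\lim$ by the familiar argument for KL-type functions: since $J(\bth_t)$ converges almost surely (by part 1), if its limit were positive then (KL') would force $\nmeu{\gJ(\bth_t)} \geq \psi(J(\bth_t))$ to stay bounded away from $0$ on a tail event of positive probability, contradicting part (2). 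Finally, part (4) is immediate from (NSC): since $J(\bth_t)\ap 0$, eventually $\bth_t\in L_J(r)$, and $\rho(\bth_t)\leq \eta(J(\bth_t))\ap 0$ by continuity of $\eta$ at $0$.

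The main obstacle will be the bookkeeping that produces \eqref{eq:prop-rs} with genuinely summable remainder $\z_t$: the perturbation $\de_t\mu_t\vbold_t$ in \eqref{eq:prop-trans} interacts with the stochastic gradient inside Lemma \ref{lemma:42}, and the needed cancellations require carefully pairing $\de_t\to 0$ with bounds on $E_t\nmeusq{\vbold_t}$ obtained by unrolling \eqref{eq:122}. The rest of the argument is a fairly standard Robbins--Siegmund / KL cascade once this asymptotic-decoupling step is executed cleanly.
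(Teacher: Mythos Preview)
Your approach shares the essential starting point with the paper: the change of variables $\ubold_t = \w_t + k_t\vbold_t$ leading to \eqref{eq:prop-trans}, followed by a Robbins--Siegmund argument and the (KL')/(NSC) upgrades. Where you diverge is in the choice of Lyapunov function. You propose to track $J(\ubold_t)$ alone and to control $\vbold_t$ \emph{separately} by unrolling \eqref{eq:122}; the paper instead uses the \emph{joint} Lyapunov function $V_t := J(\ubold_t) + \nmeusq{\vbold_t}$.

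This difference is not cosmetic; it is where your plan has a real gap. The stochastic gradient $\h_{t+1}$ approximates $\gJ(\w_t)$, not $\gJ(\ubold_t)$, and $\w_t - \ubold_t = -k_t\vbold_t$ carries no decaying prefactor (unlike the $\de_t\mu_t\vbold_t$ perturbation you focus on). After applying Lemma~\ref{lemma:42} and decomposing $\h_{t+1}$, the cross term $-\al_t(b_t+k_{t+1})\IP{\gJ(\ubold_t)}{\gJ(\w_t)-\gJ(\ubold_t)}$ produces an error of order $\al_t\,\nmeu{\gJ(\ubold_t)}\cdot\nmeu{\vbold_t}$. Your AM--GM split turns this into either $\al_t\nmeusq{\vbold_t}$ or $\al_t^{-1}\nmeusq{\vbold_t}$, both of which require control on $\nmeusq{\vbold_t}$. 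But any a~priori bound obtained by unrolling \eqref{eq:122} involves $\al_\tau^2\nmeusq{\h_{\tau+1}}$, whose conditional expectation is bounded via \eqref{eq:2110} by a multiple of $1+J(\w_\tau)$ --- precisely the quantity you are trying to bound. This is circular, and $\de_t\to 0$ does not help, since the offending term comes from $k_t$, not $\de_t$. The transfers ``$|J(\bth_t)-J(\ubold_t)|\to 0$'' in Part~(1) and ``$\vbold_t\to 0$ along a subsequence'' in Part~(2) likewise presuppose control on $\vbold_t$ that you have not independently secured.

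The paper breaks this circularity by putting $\nmeusq{\vbold_t}$ \emph{into} the Lyapunov function: the $\vbold$-update contributes $E_t[\nmeusq{\vbold_{t+1}}]\leq \mub^2\nmeusq{\vbold_t}+\cdots$, producing a strictly negative term $-(1-\mub^2)\nmeusq{\vbold_t}$ in the supermartingale inequality for $V_t$. All the $O(\al_t)\nmeusq{\vbold_t}$ and $O(\al_t)\nmeu{\vbold_t}\cdot\nmeu{\gJ(\ubold_t)}$ cross terms are then absorbed, for $t$ large, into a $2\times 2$ positive-definite quadratic form in $(\nmeu{\vbold_t},\nmeu{\gJ(\ubold_t)})$, leaving a clean Robbins--Siegmund inequality that delivers $\sum_t\nmeusq{\vbold_t}<\infty$ (hence $\vbold_t\to\bz$, not merely along a subsequence) and $\sum_t\al_t\nmeusq{\gJ(\ubold_t)}<\infty$ \emph{simultaneously}. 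Once you adopt the joint Lyapunov function, the rest of your outline for Parts~(2)--(4) matches the paper's argument.
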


Now we state some useful corollaries of the above theorem.

\begin{corollary}\label{coro:211}
Suppose that the various constants satisfy the assumptions in Section
\ref{sssec:213},
while the objective function $J(\cdot)$ satisfies Standing Assumptions
(S1) and (S2) in Section \ref{sssec:211}.
Further, suppose the stochastic gradient $\h_{t+1}$ satisfies the
assumptions \eqref{eq:219}--\eqref{eq:2110} in Section \ref{sssec:212},
with $B_t = 0$ for all $t$, and $M_t^2 \leq M^2$ for all $t$
for some fixed constant $M$.
With these assumptions, we can state the following:
\ben
\item Suppose
\be\label{eq:223}
\sum_{t=0}^\infty \al_t^2 < \infty .
\ee
Then $\{ \gJ(\bth_t) \}$ and $\{ J(\bth_t) \}$ are bounded, and in addition,
$J(\bth_t)$ converges almost surely to some random variable as $\tai$.
\item If in addition \eqref{eq:222} holds, then 
\bd
\liminf_{\tai} \nmeu{\gJt} = 0 .
\ed
\item If in addition $J(\cdot)$ satisfies (KL'), 
then $J(\bth_t) \ap 0$ and $\gJ(\bth_t) \ap \bz$ as $\tai$, where both 
convergences are in the almost sure sense.
\item Suppose that in addition to (KL'), $J(\cdot)$ also satisfies (NSC),
and that \eqref{eq:223} and \eqref{eq:222} both hold.
Then $\rho(\bth_t) \ap 0$ almost surely as $\tai$.
\een
\end{corollary}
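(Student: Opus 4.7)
The plan is straightforward: Corollary \ref{coro:211} is essentially a specialization of Theorem \ref{thm:21} obtained by restricting the stochastic gradient to be unbiased ($B_t = 0$) and to have uniformly bounded conditional variance ($M_t^2 \leq M^2$). So the proof consists entirely of verifying that the hypotheses of Theorem \ref{thm:21} hold under the stronger structural assumptions of the corollary, and then invoking the corresponding item of the theorem.

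More concretely, I would proceed as follows. First, since $B_t = 0$ for all $t$, the bias-summability condition $\sum_{t=0}^\infty \al_t B_t < \infty$ in \eqref{eq:221} holds trivially (the sum is identically zero). Second, since $M_t^2 \leq M^2$ uniformly in $t$, the variance-summability condition gives $\sum_{t=0}^\infty \al_t^2 M_t^2 \leq M^2 \sum_{t=0}^\infty \al_t^2$, which is finite whenever \eqref{eq:223} holds. Therefore, under the assumptions of the corollary, the single condition \eqref{eq:223} is equivalent to the conjunction \eqref{eq:221} of Theorem \ref{thm:21}.

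With this equivalence in hand, each of the four items of the corollary follows by directly applying the corresponding item of Theorem \ref{thm:21}. Item (1) uses only \eqref{eq:221}, and yields boundedness of $\{\gJ(\bth_t)\}$, $\{J(\bth_t)\}$ together with almost-sure convergence of $\{J(\bth_t)\}$ to a random variable. Item (2) additionally invokes \eqref{eq:222} to obtain $\liminf_{\tai}\nmeu{\gJt}=0$. Items (3) and (4) further incorporate (KL'), respectively (KL') and (NSC), and give almost-sure convergence of $J(\bth_t)\ap 0$ and $\gJ(\bth_t)\ap\bz$, respectively $\rho(\bth_t)\ap 0$.

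There is no real obstacle here: the corollary is a direct transcription of the theorem under the stronger hypotheses on the stochastic gradient, and the only work is the one-line observation that $B_t \equiv 0$ and $M_t^2 \leq M^2$ turn the compound condition \eqref{eq:221} into the single condition \eqref{eq:223}. No new estimates, no new induction, and no additional appeal to (S1), (S2), or the structural assumptions on the constants in Section \ref{sssec:213} are needed beyond what Theorem \ref{thm:21} already consumes.
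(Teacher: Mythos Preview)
Your proposal is correct and matches the paper's own treatment: the paper does not give a separate proof of Corollary~\ref{coro:211}, since it is intended as an immediate specialization of Theorem~\ref{thm:21}. Your observation that $B_t\equiv 0$ trivializes $\sum_t \al_t B_t<\infty$ and that $M_t^2\le M^2$ reduces $\sum_t \al_t^2 M_t^2<\infty$ to \eqref{eq:223} is exactly the one-line reduction the paper has in mind.
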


Note that \eqref{eq:223} and \eqref{eq:221} are the familiar
Robbins-Monro conditions introduced in \cite{Robbins-Monro51}.
Thus, when the stochastic gradient is unbiased and has bounded variance,
the conditions for the convergence of the general algorithm 
\eqref{eq:121}--\eqref{eq:122} are the familiar ones for SGD, as
shown in \cite{MV-RLK-SGD-JOTA24}.

\begin{corollary}\label{coro:212}
Under the assumptions of Theorem \ref{thm:21}, suppose further that there
exists a sequences of constants $c_t$ (known as the ``increment'') such that
$B_t = O(c_t)$, and $M_t^2 = O(1/c_t^2)$.
With these assumptions, we can state the following:
\ben
\item Suppose
\be\label{eq:224}
\sum_{t=0}^\infty \al_t^2 < \infty , \quad
\sum_{t=0}^\infty \al_t c_t < \infty , \quad 
\sum_{t=0}^\infty (\al_t^2)/(c_t^2) < \infty .
\ee
Then $\{ \gJ(\bth_t) \}$ and $\{ J(\bth_t) \}$ are bounded, and in addition,
$J(\bth_t)$ converges almost surely to some random variable as $\tai$.
\item If in addition $J(\cdot)$ satisfies (KL'), and
\eqref{eq:222} holds,
then $J(\bth_t) \ap 0$ and $\gJ(\bth_t) \ap \bz$ as $\tai$,
where both convergences are in the almost sure sense.
\item Suppose that in addition to (KL'), $J(\cdot)$ also satisfies (NSC),
and that \eqref{eq:223} and \eqref{eq:222} both hold.
Then $\rho(\bth_t) \ap 0$ almost surely as $\tai$.
\een
\end{corollary}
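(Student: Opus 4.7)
The plan is to recognize that Corollary \ref{coro:212} is a direct specialization of Theorem \ref{thm:21}, obtained by choosing a particular parameterization for how the bias bound $B_t$ and the conditional variance bound $M_t^2$ scale with an auxiliary ``increment'' sequence $c_t > 0$. The key observation is that the three summability hypotheses \eqref{eq:224} are calibrated precisely so that, when combined with $B_t = O(c_t)$ and $M_t^2 = O(1/c_t^2)$, they reproduce the three summability hypotheses \eqref{eq:221} of Theorem \ref{thm:21} verbatim.

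Concretely, I would first unpack the big-$O$ assumptions: there exist finite constants $K_1, K_2$ (possibly after discarding finitely many initial terms) such that $B_t \leq K_1 c_t$ and $M_t^2 \leq K_2/c_t^2$. Substituting these into \eqref{eq:221}, the first condition $\sum \al_t^2 < \infty$ is identical to the first condition in \eqref{eq:224}; the second condition becomes $\sum \al_t B_t \leq K_1 \sum \al_t c_t < \infty$, which is the middle condition in \eqref{eq:224}; and the third condition becomes $\sum \al_t^2 M_t^2 \leq K_2 \sum \al_t^2/c_t^2 < \infty$, which is the last condition in \eqref{eq:224}. Thus the full hypothesis of Theorem \ref{thm:21}(1) is verified under the hypotheses of Corollary \ref{coro:212}.

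With that reduction in hand, each assertion of the corollary transfers mechanically from the corresponding assertion of Theorem \ref{thm:21}: part (1) follows from Theorem \ref{thm:21}(1); part (2) follows from Theorem \ref{thm:21}(3) once \eqref{eq:222} and (KL') are additionally imposed; and part (3) follows from Theorem \ref{thm:21}(4) once (NSC) is further imposed. No new analytic estimates are required, since everything substantive is already contained in the proof of Theorem \ref{thm:21}.

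The ``hard part'' is essentially nonexistent here: the entire argument is bookkeeping that translates one set of summability conditions into another. The only minor subtlety worth flagging explicitly is that $B_t = O(c_t)$ and $M_t^2 = O(1/c_t^2)$ are only \emph{eventual} bounds, so one should note that modifying finitely many terms of any of the series in question does not affect convergence. The real content of the corollary is conceptual rather than technical: it repackages Theorem \ref{thm:21} into a form that is directly applicable to zeroth-order methods, where $c_t$ plays the role of the finite-difference step used to estimate the gradient and the trade-off between bias ($\propto c_t$) and variance ($\propto 1/c_t^2$) is precisely the one encoded by the hypotheses on $B_t$ and $M_t^2$.
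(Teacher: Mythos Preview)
Your proposal is correct and matches the paper's treatment: the paper does not give a separate proof of this corollary, treating it as an immediate consequence of Theorem~\ref{thm:21} once one observes that the summability conditions \eqref{eq:224} together with $B_t = O(c_t)$ and $M_t^2 = O(1/c_t^2)$ imply the conditions \eqref{eq:221}. Your bookkeeping argument is exactly what is needed.
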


Thus the point of these two corollaries is to show that the conditions
for convergence in Theorem \ref{thm:21} are natural generalizations
of standard results for stochastic gradient descent, as proved in
\cite{MV-RLK-SGD-JOTA24}, even in the presence of time-varying momentum terms.
In contrast, as shown in the Appendix, the previously
known sufficient conditions for convergence 
given in \cite{Sebbouh-et-al-CoLT21} are more restrictive.

Corollary \ref{coro:212} is relevant when the stochastic gradient is obtained
using only function evaluations, and no gradient computations.
This approach was frst introduced for the case $d=1$ in \cite{Kief-Wolf-AOMS52},
where it is established that we can take $B_t = O(c_t)$ and $M_t = O(1/c_t)$,
where $c_t$ is the increment.
The approach was extended to the case of arbitrary $d$ in \cite{Blum54}.
In \cite{Kief-Wolf-AOMS52}, the conditions \eqref{eq:224}--\eqref{eq:222}
are derived as sufficient conditions for convergence, and a different
proof is given in \cite{Blum54}.
Thus \eqref{eq:224}--\eqref{eq:222} are usually referred to as the
Kiefer-Wolfowitz-Blum conditions.

Blum's approach required $d+1$ function evaluations, which can be troublesome
if $d$ is very large.
In a series of papers, an alternate approach that requires \textit{only two}
function evaluations was proposed, under the name of Simultaneous Perturbation
Stochastic Perturbation (SPSA).
The latest in this series of papers is \cite{Sadegh-Spall-TAC98}.
In this paper, the perturbation is by a $d$-dimensional vector of
Rademacher variables, that is, pairwise independent random variables
that assume the values $\pm$ with equal probability.
Let $\bD_{t+1} \in \bp^d$ denote the vector of Rademacher variables
at time $t+1$.
Then the search direction $\h_{t+1}$ in \eqref{eq:121} is defined
componentwise, via
\be\label{eq:224a}
h_{t+1,i} = \frac{[ J(\bth_t + c_t \bD_{t+1}) + \xi_{t+1,i}^+ ]
- [ J(\bth_t - c_t \bD_{t+1}) - \xi_{t+1,i}^- ] } {2 c_t \D_{t+1, i}} ,
\ee
where  $\xi_{t+1,1}^+ , \cdots , \xi_{t+1,d}^+$,
$\xi_{t+1,1}^- , \cdots , \xi_{t+1,d}^-$ represent the measurement errors.
Here, $c_t$ denotes the multiplier of the random vector, which
is called the ``increment.''
In this case, it can be shown that $B_t = O(c_t)$, and $M_t^2 = O(1/c_t^2)$,
and Corollary \ref{coro:212} applies.
A similar idea is used in \cite{Nesterov-FCM17},
except that the bipolar vector $\bD_{t+1}$ is replaced by a random
Gaussian vector.

The objective of the next theorem is to show that if the hypothesis (KL')
is strengthened to (PL), then it is possible to obtain bounds on the
\textit{rate} of convergenc e.

\begin{theorem}\label{thm:22}
Let various symbols be as in Theorem \ref{thm:21}.
Suppose $J(\cdot)$ satisfies the standing assumptions (S1) and (S2)
and also property (PL),
and that \eqref{eq:224} and \eqref{eq:222} hold.
Further, suppose there exist constants $\g > 0$ and $\de \geq 0$ such
that
\bd
B_t = O(t^{-\g}), \quad
M_t = O(t^\de) , \fa t \geq 1 ,
\ed
where we take $\g = 1$ if $B_t = 0$ for all sufficiently large $t$,
and $\de = 0$ if $M_t$ is bounded.
Choose the step-size sequence $\{ \al_t \}$ as
$O(t^{-(1-\phi)})$ and $\OM(t^{-(1-C)})$
where $\phi$ and $C$ are chosen to satisfy
\be\label{eq:225}
0 < \phi < \min \{ 0.5 - \de , \g \} , \quad
C \in (0,\phi] .
\ee
Define
\be\label{eq:226}
\nu := \min \{ 1 - 2( \phi + \de) , \g - \phi \} .
\ee
Then $\nmeusq{\gJt} = o(t^{-\la})$ and $J(\bth_t) = o(t^{-\la})$
for every $\la \in (0,\nu)$.
In particular, by choosing $\phi$ very small, it follows that
$\nmeusq{\gJt} = o(t^{-\la})$ and $J(\bth_t) = o(t^{-\la})$ whenever
\be\label{eq:227}
\la < \min \{ 1 - 2 \de , \g \} .
\ee
\end{theorem}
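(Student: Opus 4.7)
The plan is to bootstrap off the almost sure convergence already established in Theorem \ref{thm:21}, and then upgrade it to a quantitative rate via a Chung-type recursion for $J(\bth_t)$ driven by the (PL) inequality. The (PL) property gives the crucial one-sided bound $\nmeusq{\gJ(\bth)} \geq K J(\bth)$, which is what lets a ``gradient-type'' descent inequality translate into geometric-like contraction on $J$ itself. Before anything else, I would invoke the variable transformation outlined in Section \ref{ssec:41} (the ``asymptotically decoupled'' version of the change of variables from \cite{MV-TUKR-arxiv25}) so that the update for the transformed iterate $\ubold_t$ reads, up to vanishing coupling, as a plain stochastic-gradient step with step size $(b_t + k_{t+1})\al_t$, bias bounded by $B_t[1+\nmeu{\gJ(\w_t)}]$, and conditional variance bounded by $M_t^2[1+J(\w_t)]$.

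Next I would apply the descent lemma (Lemma \ref{lemma:42}) to $J$ along the transformed update and take $E_t$, using the splitting $\h_{t+1}=\gJ(\w_t)+\x_t+\bzt$ together with \eqref{eq:219}--\eqref{eq:2110}. The cross term $\IP{\gJ(\w_t)}{\x_t}$ is absorbed using $2|\IP{\gJ}{\x_t}|\leq \nmeusq{\gJ}/2 + 2\nmeusq{\x_t}$ and the bound \eqref{eq:219}, while the $L/2$-quadratic term is controlled via \eqref{eq:2110}. Combined with (PL) and using $\nmeusq{\gJ(\w_t)}\leq 2LJ(\w_t)$ from Lemma \ref{lemma:21} to dominate $B_t^2 J(\w_t)$ terms, the net effect (valid for all $t$ large enough so that the negative drift $-c\al_t K J(\w_t)$ dominates the lower-order $B_t^2$ and $M_t^2\al_t^2$ contributions to the coefficient of $J(\w_t)$) is a recursion of the form
\begin{equation}\label{eq:plan-rec}
E_t[J(\w_{t+1})] \leq (1 - \kappa \al_t) J(\w_t) + D_1 \al_t B_t + D_2 \al_t^2 M_t^2 ,
\end{equation}
for some $\kappa > 0$ and constants $D_1,D_2$, the key being that the contraction factor is $(1-\kappa\al_t)$ rather than anything weaker.

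With the prescribed orders $\al_t = \Theta$ between $t^{-(1-C)}$ and $t^{-(1-\phi)}$, $B_t = O(t^{-\g})$, $M_t = O(t^\de)$, the two forcing terms in \eqref{eq:plan-rec} are of orders $t^{-(1+\g-\phi)}$ and $t^{-(2-2\phi-2\de)}$ respectively. I would then apply a standard Chung-type lemma to sequences satisfying $a_{t+1} \leq (1 - \kappa'/t^{1-C}) a_t + E/t^{1+\rho}$: this yields $a_t = O(t^{-\rho})$, and with a slight refinement (comparison with $t^{-\la}$ for $\la < \rho$) even $a_t = o(t^{-\la})$. Taking $\rho$ to be the worst of the two exponents $\g - \phi$ and $1 - 2(\phi + \de)$ gives exactly
\begin{equation}
E[J(\w_t)] = o(t^{-\la}), \quad \la < \nu := \min\{1-2(\phi+\de), \g - \phi\} .
\end{equation}
Almost-sure (rather than in-expectation) rates follow from a Robbins--Siegmund style argument applied to \eqref{eq:plan-rec}, since the cumulative noise terms are summable under \eqref{eq:224}. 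Finally, $\nmeusq{\gJt}=o(t^{-\la})$ follows from $\nmeusq{\gJ}\leq 2LJ$ plus \eqref{eq:123} and the Lipschitz continuity of $\gJ$ (so that $J(\w_t)$ and $J(\bth_t)$ differ by $o(t^{-\la})$ contributions once we know $\vbold_t$ itself decays; the decay of $\vbold_t$ comes from its update \eqref{eq:122} and the boundedness of $\mu_t$ away from $1$).

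The main obstacle is the first step: turning the coupled pair \eqref{eq:121}--\eqref{eq:122} into a single scalar recursion of the form \eqref{eq:plan-rec} cleanly enough that the ``residual coupling'' (which no longer vanishes exactly when $\mu_t$ varies in $t$) contributes only lower-order terms that can be folded into the $D_1$ and $D_2$ constants. The condition $\de_t\to 0$ of \eqref{eq:2113} is what makes this possible, but one has to be careful: the residual coupling involves $\de_t \vbold_t$, and controlling $\vbold_t$ in turn requires an auxiliary bound along the iterates. I would discharge this by first proving (using the boundedness of $\gJ(\bth_t)$ from part 1 of Theorem \ref{thm:21}) that $\nmeu{\vbold_t}$ stays bounded, and then that $\de_t \nmeu{\vbold_t}$ is summable against the step size; after that the Chung/Robbins--Siegmund machinery goes through as sketched. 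Optimizing over $\phi\in(0, \min\{0.5-\de,\g\})$ by sending $\phi\downarrow 0$ produces the cleaner bound $\la < \min\{1-2\de,\g\}$ in \eqref{eq:227}.
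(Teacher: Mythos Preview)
Your overall strategy --- a Chung/Robbins--Siegmund rate argument driven by the (PL) contraction --- matches the paper's, but your choice of Lyapunov function creates a gap that the paper sidesteps. The paper does \emph{not} derive a scalar recursion for $J(\w_t)$ or $J(\ubold_t)$ alone. Instead it reuses the joint Lyapunov $V_t = J(\ubold_t) + \nmeusq{\vbold_t}$ and the bound \eqref{eq:4329a} already established in the proof of Theorem~\ref{thm:21}, and then applies the rate theorem (Theorem~\ref{thm:42}) with the forcing orders $\al_t^2 M_t^2 = O(t^{-2+2(\phi+\de)})$ and $\al_t B_t = O(t^{-1+\phi-\g})$. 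Under (PL), $\nmeusq{\gJ(\ubold_t)} \geq K J(\ubold_t)$, and since $\al_t \to 0$ the negative term $-\tfrac{1-\mub^2}{2}\nmeusq{\vbold_t} - \tfrac{\bul}{2}\al_t\nmeusq{\gJ(\ubold_t)}$ eventually dominates $-c\,\al_t V_t$; this yields $V_t = o(t^{-\la})$ for every $\la \in (0,\nu)$, so $J(\ubold_t)$ and $\nmeusq{\vbold_t}$ inherit the rate \emph{simultaneously}. The transfer to $J(\bth_t)$ and $\nmeusq{\gJt}$ then follows from $\bth_t = \ubold_t - (k_t+\e_t)\vbold_t$, Lemma~\ref{lemma:42}, and Lemma~\ref{lemma:21}.

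Your plan, by contrast, tries to close a recursion on $J$ alone and treat $\vbold_t$ as an external perturbation. This is where it breaks. The first-order coupling contribution $\de_t\mu_t\IP{\gJ(\ubold_t)}{\vbold_t}$ in the descent lemma cannot be ``folded into the $D_1$ and $D_2$ constants'' as you claim: it is of order $|\de_t|\,\nmeu{\vbold_t}\,\nmeu{\gJ(\ubold_t)}$, and the hypotheses impose no quantitative rate on $\de_t$ beyond $\de_t \to 0$ (see \eqref{eq:2113}). If you literally work with $\w_t$ as in your displayed recursion, the situation is worse still: the $a_t\vbold_t$ term in \eqref{eq:121} does not vanish at all. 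Your proposed remedy --- boundedness of $\nmeu{\vbold_t}$ and summability of $\de_t\nmeu{\vbold_t}$ against $\al_t$ --- suffices for convergence but not for a rate: a Chung recursion needs the additive forcing to be $O(t^{-(1+\rho)})$, which demands a quantitative decay of $\nmeu{\vbold_t}$, not mere boundedness. You could repair this by running a parallel rate argument on $\nmeusq{\vbold_t}$ via \eqref{eq:4110}, but that re-entangles the two recursions, and at that point you are effectively reconstructing the paper's joint-Lyapunov approach anyway.
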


Until now we have studied \textit{full coordinate updating}, whereby
the update equations \eqref{eq:121}--\eqref{eq:122} are applied to
\textit{every} coordinate of $\w_t$ and $\vbold_t$.
Next, we invoke a ``meta-theorem'' from \cite[Section 4]{MV-TUKR-arxiv25}
to state a theorem on \textit{block updating}, whereby, at each step $t$,
\textit{some but not necessarily all} components of $\w_t$
and $\vbold_t$ are updated.
Let $S_t \seq [d]$ denote the components of $\w_t$ and of $\vbold_t$
that are updated at each $t$.
Then both the cardinality and the elements of $S_t$
can be random, and can vary with $t$.

In \cite[Section 4]{MV-TUKR-arxiv25}, three different methods of block
updating are studied, which are now reprised for the convenience of the reader.
Note that Option 1 is the full-coordinate update.

\textbf{Option 2: Single Coordinate Update:}
At time $t$, choose an index $\kappa_t \in [d]$ at random with a
uniform probability, and independently of previous choices.
Let $\eb_{\kappa_t}$ denote the elementary unit vector with a $1$
as the $\kappa_t$-th component and zeros elsewhere.
Then define
\be\label{eq:232}
\h^{(2)}_{t+1} = d \eb_{\kappa_t } \circ \h_{t+1} ,
\ee
where $\circ$ denotes the Hadamard, or component-wise, product of two
vectors of equal dimension.

\textbf{Option 3: Multiple Coordinate Update:}
This option is just coordinate update along multiple
coordinates chosen independently at random.
At time $t$, choose $N$ different indices $\kappa_t^n$ from $[d]$
\textit{with replacement}, with each choice being independent of the rest,
and also of past choices.
Moreover, each $\kappa_t^n$ is chosen from $[d]$ with uniform probability.
Then define
\be\label{eq:233}
\h^{(3)}_{t+1} := \frac{d}{N} \sum_{n=1}^N \eb_{\kappa_t^n}
\circ \h_{t+1} .
\ee

\textbf{Option 4: Bernoulli Update:}
At time $t$, let $\{ B_{t,i}, i \in [d] \}$  be independent Bernoulli
processes with success rate $\rho_t$.
Thus
\be\label{eq:234}
\Pr \{ B_{t,i} = 1 \} = \rho_t , \fa i \in [d] .
\ee
It is permissible for the success probability $\rho_t$ to vary with time.
However, at any one time, all components must have the same success
probability.
Then define
\be\label{eq:235}
\vbold_t := \sum_{i=1}^d \eb_i I_{ \{ B_{t,i} = 1 \} }  \in \bi^d .
\ee
Thus $\vbold_t$ is a random vector, and
$v_{t,i}$ equals $1$ if $ B_{t,i} = 1$, and equals $0$ otherwise.
Now define
\be\label{eq:236}
\h^{(4)}_{t+1} = \frac{1}{\rho_t}  \vbold_t \circ \h_{t+1} .
\ee

It is necessary to clear up one important aspect of block updating.
It is \textit{not the case} that the full stochastic gradient $\h_{t+1}$
is computed first , and the set $S_t$ of components to be updated is
determined afterwards.
In that approach, there would not be any savings in computational effort.
Rather, the set $S_t$ is determined \textit{first}, and afterwards
only the components $h_{i,t+1}, i \in S_t$ are computed.
If a method such as back-propagation is used, then computing
\textit{only some} components of a gradient is not noticeably cheaper
than computing the full gradient.
However, if the SPSA approach is used, then \eqref{eq:224a} can be applied,
by choosing the Rademacher variable $\D_{i,t} = 0$ for $i \not\in S_t$.
If $|S_t| \ll d$, this approach results in considerable savings in
computation.

With all these preliminaries, we now state the theorem on block updating.
\begin{theorem}\label{thm:23}
Suppose the stochastic gradient $\h_{t+1}$ satisfies the bounds
\eqref{eq:219} and \eqref{eq:2110}.
Suppose that in \eqref{eq:122}--\eqref{eq:123}, the quantity $\h_{t+1}$ is replaced by
$\h_{t+1}^{(k)}$ for $k = 2, 3, 4$.
Further, suppose that when Option $4$ is used, then
\bd
\inf_t \rho_t =: \bar{\rho} > 0 .
\ed
Then the conclusions of Theorem \ref{thm:21} or Theorem \ref{thm:22}
continue to hold under the same assumptions on $J(\cdot)$ and the
same assumptions on $\{ \al_t \}$.
\end{theorem}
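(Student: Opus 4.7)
The plan is to reduce Theorem \ref{thm:23} to Theorems \ref{thm:21}--\ref{thm:22} by showing that, for each of the three block-updating variants $k\in\{2,3,4\}$, the modified search direction $\h^{(k)}_{t+1}$ satisfies the same structural bounds \eqref{eq:219}--\eqref{eq:2110} that $\h_{t+1}$ does, with constants $\tilde B_t,\tilde M_t$ that differ from $B_t,M_t$ only by an absolute multiplicative factor. Since the hypotheses of Theorems \ref{thm:21}--\ref{thm:22} (namely the Robbins--Monro or Kiefer--Wolfowitz--Blum summability conditions on $\al_t B_t$ and $\al_t^2 M_t^2$) are invariant under such constant-factor inflation, the conclusions transfer verbatim.

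First I would verify the bias condition \eqref{eq:219}. The selection variables $\kappa_t$, $\{\kappa_t^n\}$, and $\{B_{t,i}\}$ are drawn at time $t$ using only external randomness that is independent of $\h_{t+1}$ (conditional on $\F_t$). Consequently, direct computation gives $E[d\,\eb_{\kappa_t}\mid\F_t]=\oneb$, $E[(d/N)\sum_{n}\eb_{\kappa_t^n}\mid\F_t]=\oneb$, and $E[\vbold_t/\rho_t\mid\F_t]=\oneb$, so that in every case
\[
E_t[\h^{(k)}_{t+1}] = \oneb\circ E_t[\h_{t+1}] = \z_t,
\]
and therefore the bias term $\x_t=\z_t-\gJw$ is unchanged. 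The bound \eqref{eq:219} holds with $\tilde B_t=B_t$.

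Next I would bound the conditional variance. For Option 2 a componentwise calculation yields
\[
E_t[\nmeusq{\h^{(2)}_{t+1}}] \;=\; d^2\cdot\frac{1}{d}\, E_t[\nmeusq{\h_{t+1}}] \;=\; d\,E_t[\nmeusq{\h_{t+1}}],
\]
and Option 3 gives the same bound up to a factor of $1+O(1/N)$, while Option 4, using independence of the $B_{t,i}$ and $E[v_{t,i}^2]=\rho_t$, gives $E_t[\nmeusq{\h^{(4)}_{t+1}}]=\rho_t^{-1}E_t[\nmeusq{\h_{t+1}}]\le\bar\rho^{-1}E_t[\nmeusq{\h_{t+1}}]$. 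By \eqref{eq:218} and \eqref{eq:2110} we have $E_t[\nmeusq{\h_{t+1}}]\le\nmeusq{\z_t}+M_t^2[1+J(\w_t)]$. It remains to absorb the $\nmeusq{\z_t}$ term, which I would handle using \eqref{eq:219} together with Lemma \ref{lemma:21}:
\[
\nmeusq{\z_t}\le 2\nmeusq{\gJw}+2\nmeusq{\x_t}\le 4L\,J(\w_t)+2B_t^2\bigl(1+\nmeu{\gJw}\bigr)^2,
\]
which is dominated by a constant multiple of $(1+B_t^2)[1+J(\w_t)]$. Putting everything together, \eqref{eq:2110} holds for $\h^{(k)}_{t+1}$ with $\tilde M_t^2=O\!\bigl(\kappa_k\,(M_t^2+B_t^2+1)\bigr)$, where $\kappa_2=\kappa_3=d$ and $\kappa_4=\bar\rho^{-1}$.

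Since $\tilde B_t=B_t$ and $\tilde M_t^2=O(M_t^2+B_t^2+1)$, the hypotheses \eqref{eq:221}, \eqref{eq:223}, and \eqref{eq:224} of Theorems \ref{thm:21}--\ref{thm:22} are preserved (using that $\sum\al_t^2<\infty$ and $\sum\al_t B_t<\infty$ together imply $\sum\al_t^2 B_t^2<\infty$). Applying those theorems to $\h^{(k)}_{t+1}$ in place of $\h_{t+1}$ gives all the stated conclusions. The main obstacle, such as it is, lies in the absorption step for $\nmeusq{\z_t}$: one must be careful that the additional $B_t^2$ contribution to $\tilde M_t^2$ does not spoil the summability $\sum\al_t^2\tilde M_t^2<\infty$. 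For Option 4, the assumption $\inf_t\rho_t=\bar\rho>0$ is essential at exactly this point, since otherwise the variance inflation factor $1/\rho_t$ is unbounded and no such summability argument goes through.
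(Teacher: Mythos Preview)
Your argument is correct and follows the same line as the paper, which simply invokes the ``meta-theorem'' \cite[Theorem 8]{MV-TUKR-arxiv25} and omits all details; you have in effect written out the content of that cited result. The key observations --- that the sampling leaves the conditional mean $\z_t$ unchanged while inflating the second moment by a bounded factor ($d$ for Options 2--3, $1/\bar\rho$ for Option 4), and that the extra $\nmeusq{\z_t}$ term can be absorbed into a $(1+B_t^2)(1+J(\w_t))$ bound via Lemma~\ref{lemma:21} so that $\tilde M_t^2=O(M_t^2+B_t^2+1)$ still satisfies the summability hypotheses --- are exactly what that meta-theorem provides.
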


Because the theorem is a straight-forward application of
\cite[Theorem 8]{MV-TUKR-arxiv25} once Theorem \ref{thm:22} is proved,
the proof of this theorem is omitted.

\section{Proofs of the Main Results}\label{sec:Proofs}

\subsection{Transformation of Variables}\label{ssec:41}

The convergence analysis of \eqref{eq:121}--\eqref{eq:122} is based on
carrying out a linear transformation of the variables such that
the resulting equations are ``nearly'' decoupled, and are \textit{exactly}
decoupled if all terms $a_t, b_t, \e_t , \mu_t$ are constant.
In contrast, in \cite{Sebbouh-et-al-CoLT21}, the authors propose a 
linear transformation
that achieves exact decoupling even when $\mu_t$ varies with $t$.
As shown in the Appendix, this approach is \textit{untenable} when
$\mu_t$ is monotonic, either decreasing or increasing.
In contrast, our approach does not suffer from such limitations.
Moreover, as shown in the results stated in Section \ref{sec:Main},
our conditions
for the convergence of the algorithm in \eqref{eq:121}--\eqref{eq:122} 
are natural generalizations of the familiar Robbins-Monro
\cite{Robbins-Monro51} or the Kiefer-Wolfowitz-Blum
\cite{Kief-Wolf-AOMS52,Blum54} conditions, unlike in
\cite{Sebbouh-et-al-CoLT21}.

Let us rewrite \eqref{eq:121}--\eqref{eq:122} as
\be\label{eq:411}
\left[ \ba{c} \w_{t+1} \\ \vbold_{t+1} \ea \right] =
\left[ \ba{cc} I & a_t I \\ 0 & \mu_t I \ea \right]
\left[ \ba{c} \w_t \\ \vbold_t \ea \right] - 
\left[ \ba{c} b_t I \\ I \ea \right] \al_t \h_{t+1} ,
\ee
where each $I$ denotes $I_{d \times d}$.
Define
\be\label{eq:412}
A_t = \left[ \ba{cc} I & a_t I \\ 0 & \mu_t I \ea \right] ,
\LA_t = \left[ \ba{cc} I & 0 \\ 0 & \mu_t I \ea \right] .
\ee
Then $A_t$ is the coefficient matrix in \eqref{eq:411} and $\LA_t$
is the matrix of the eigenvalues of $A_t$.
In order to diagonalize $A_t$ into $\LA_t$, we compute the matrix of
eigenvectors of $A_t$, as follows:
\be\label{eq:413}
Z_t = \left[ \ba{cc} I & -\frac{a_t}{1 - \mu_t} I \\ 0 & \mu_t I \ea \right] 
= \left[ \ba{cc} I & -k_t I \\ 0 & \mu_t I \ea \right] ,
Z_t^{-1} = \left[ \ba{cc} I & k_t I \\ 0 & \mu_t I \ea \right] ,
\ee
where as already defined in \eqref{eq:2111a}, we have that
\be\label{eq:413a}
k_t = \frac{a_t}{1 - \mu_t} .
\ee
Then $Z_t^{-1} A_t Z_t = \LA_t$.
Next, define
\be\label{eq:414}
\left[ \ba{c} \ubold_t \\ \vbold_t \ea \right] :=
Z_t^{-1} \left[ \ba{c} \w_t \\ \vbold_t \ea \right] 
= \left[ \ba{c} \w_t + k_t \vbold_t \\ \vbold_t \ea \right] .
\ee
Here we take advantage of the fact that the bottom block of $Z_t^{-1}$ is
$[0 \; \; I]$.
Hence, in effect, $\w_t$ is replaced by $\ubold_t$, but $\vbold_t$
is left unaltered.
Hence the update equation for $\vbold_t$ also remains as \eqref{eq:122}.

Next we compute the update equation for $\ubold_t$.
\be\label{eq:415}
\ubold_{t+1} = \w_{t+1} + k_{t+1} \vbold_{t+1} 
= \w_{t+1} + k_t \vbold_{t+1} + \de_t \vbold_{t+1} ,
\ee
where
\be\label{eq:416}
\de_t = k_{t+1} - k_t = \frac{a_{t+1}}{1 - \mu_{t+1}} - \frac{a_t}{1 - \mu_t} .
\ee
Now observe that
\bd
\w_{t+1} + k_t \vbold_{t+1} = \w_t + a_t \vbold_t - b_t \al_t \h_{t+1}
+ k_t \mu_t \vbold_t - k_t \al_t \h_{t+1} .
\ed
However
\bd
k_t \mu_t + a_t = a_t \left( \frac{\mu_t}{1-\mu_t} + 1 \right)
= \frac{a_t}{1 - \mu_t} = k_t .
\ed
Hence we can write
\be\label{eq:417}
\w_{t+1} + k_t \vbold_{t+1} = \w_t + k_t \vbold_t - \al_t ( b_t + k_t )
\h_{t+1} = \ubold_t - \al_t ( b_t + k_t ) \h_{t+1} .
\ee
The last term in \eqref{eq:415} becomes
\be\label{eq:418}
\de_t \vbold_{t+1} = \de_t \mu_t \vbold_t - \de_t \al_t \h_{t+1} .
\ee
Substituting from \eqref{eq:417} and \eqref{eq:418} into \eqref{eq:415}
gives the final form of the update equation for $\ubold_t$.
\be\label{eq:419}
\begin{split}
\ubold_{t+1} &= \ubold_t + \de_t \mu_t \vbold_t - ( b_t + k_t + \de_t )
\al_t \h_{t+1} \\
&= \ubold_t + \de_t \mu_t \vbold_t - (b_t + k_{t+1}) \al_t \h_{t+1} ,
\end{split}
\ee
while the updating equation for $\vbold_t$ remains as before, namely
\be\label{eq:4110}
\vbold_{t+1} = \mu_t \vbold_t - \al_t \h_{t+1} .
\ee
These are the two equations whose behavior is analyzed in the remainder
of the paper.
Based on the analysis, we make inferences about the behavior $\w_t$,
and eventually, $\bth_t$.
Note that these two equations are \textit{not} decoupled in general, due to the
presence of the term $\de_t \mu_t \vbold_t$ in \eqref{eq:419}.
However, in the special case where both $a_t$ and $\mu_t$ are constant,
then $\de_t = 0$ for all $t$, and the equations are indeed decoupled.
This is the approach used in \cite{MV-TUKR-arxiv25} to study the SHB
algorithm when $\mu_t$ is constant.
More generally, if both $a_t$ and $\mu_t$ converge to some some constants
as $\tai$, then $\de_t \ap 0$ as $\tai$, and the equations become
``asymptotically decoupled.''
We can draw some useful conclusions when $\de_t \ap 0$ as $\tai$.

\subsection{The Robbins-Siegmund Theorem and Some Extensions}\label{ssec:42}

The proof of Theorem \ref{thm:21} makes use of
a fundamental result in the convergence of stochastic processes,
known as the ``almost supermartingale'' theorem due to
Robbins and Siegmund \cite[Theorem 1]{Robb-Sieg71}.
It is also found in \cite{BMP90} and in \cite{Fran-Gram21}.
The theorem states the following:

\begin{lemma}\label{lemma:41}
Suppose $\{ V_t \} , \{ f_t \} , \{ g_t \} , \{ h_t \}$ are
stochastic processes taking values in $[0,\infty)$, adapted to some
filtration $\{ \F_t \}$, satisfying
\be\label{eq:421}
E_t( V_{t+1} ) \leq (1 + f_t) V_t + g_t - h_t \as, \fa t ,
\ee
where, as before, $E_t(V_{t+1})$ is a shorthand for $E(V_{t+1} | \F_t )$.
Then, on the set
\bd
\OM_0 := \{ \om \in \OM : \sum_{t=0}^\infty f_t(\om) < \infty \}
\cap \{ \om : \sum_{t=0}^\infty g_t(\om) < \infty \} ,
\ed
we have that $\lim_{\tai} V_t$ exists, and in addition,
$\sum_{t=0}^\infty h_t(\om) < \infty$.
In particular, if $P(\OM_0) = 1$, then $\{ V_t \}$ is bounded
almost surely, and $\sum_{t=0}^\infty h_t(\om) < \infty$ almost surely.
\end{lemma}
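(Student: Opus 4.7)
The plan is to convert the almost-supermartingale inequality into a genuine nonnegative supermartingale via a two-step transformation---a multiplicative one to absorb the $(1+f_t)$ factor, and an additive one to absorb $g_t$ and $h_t$---and then apply Doob's nonnegative supermartingale convergence theorem after a stopping-time localization.

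Concretely, I would set $\pi_t := \prod_{s=0}^{t-1}(1+f_s)$, with $\pi_0 = 1$, so that $\pi_{t+1}$ is $\F_t$-measurable and $\pi_{t+1} \geq 1$; on $\OM_0$ the product converges to a finite $\pi_\infty \in [1,\infty)$. Dividing \eqref{eq:421} by $\pi_{t+1}$ and setting $\tilde V_t := V_t/\pi_t$, $\tilde g_t := g_t/\pi_{t+1}$, $\tilde h_t := h_t/\pi_{t+1}$ yields
\bd
E_t(\tilde V_{t+1}) \leq \tilde V_t + \tilde g_t - \tilde h_t .
\ed
Define $M_t := \tilde V_t + \sum_{s=0}^{t-1}(\tilde h_s - \tilde g_s)$; a one-line telescoping computation gives $E_t(M_{t+1}) \leq M_t$, so $\{M_t\}$ is a supermartingale---but not a priori nonnegative, because of the subtracted $\tilde g_s$. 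To repair this I would localize at the stopping time $\tau_N := \inf\{t \geq 0 : \sum_{s=0}^{t} g_s > N\}$. Since $\tilde g_s \leq g_s$ and $\sum_{s=0}^{t-1} g_s \leq N$ for every $t \leq \tau_N$ (by the definition of $\tau_N$ as a first-passage time), one obtains $M_{t\wedge\tau_N} + N \geq 0$ for all $t$; the stopped process is still a supermartingale by optional stopping, and Doob's nonnegative supermartingale convergence theorem gives that $\lim_t M_{t\wedge\tau_N}$ exists almost surely.

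On $\OM_0$, $\sum_s g_s < \infty$ forces $\tau_N(\om) = \infty$ for some $N = N(\om)$, so by exhausting $\OM_0 = \bigcup_N (\OM_0 \cap \{\tau_N = \infty\})$ one concludes that $\lim_t M_t$ exists a.s.\ on $\OM_0$. The inequality $\tilde g_s \leq g_s$ also makes $\sum_s \tilde g_s$ finite on $\OM_0$, and rearranging
\bd
\tilde V_t + \sum_{s<t}\tilde h_s \; = \; M_t + \sum_{s<t}\tilde g_s
\ed
shows that the left-hand side converges. This left-hand side is a sum of the nonnegative $\tilde V_t$ and the monotone nondecreasing partial sum $\sum_{s<t}\tilde h_s$; since the second summand is monotone and the total is bounded, it must converge, and then $\tilde V_t$ converges too. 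Multiplying by $\pi_t \to \pi_\infty$ yields $V_t \to V_\infty$ on $\OM_0$, and $\sum_s h_s \leq \pi_\infty \sum_s \tilde h_s < \infty$. The main obstacle is exactly the localization step: because $M_t$ is a signed supermartingale, Doob's theorem is not directly applicable, and one must engineer a stopping time that converts the a.s.\ finiteness of $\sum g_s$ into a uniform pointwise lower bound on the stopped process before exhausting $\OM_0$ by letting $N \to \infty$.
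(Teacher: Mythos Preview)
Your proof is correct and follows the classical Robbins--Siegmund argument: normalize by the product $\pi_t$ to kill the $(1+f_t)$ factor, form the compensated process $M_t$, localize via a stopping time on the partial sums of $g_t$ to obtain a nonnegative supermartingale, apply Doob's convergence theorem, and then exhaust $\OM_0$. The final step---using the monotonicity of $\sum_{s<t}\tilde h_s$ together with nonnegativity of $\tilde V_t$ to split the convergent sum---is exactly right.

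The paper, however, does not give its own proof of this lemma; it simply cites it as Theorem~1 of Robbins and Siegmund \cite{Robb-Sieg71} (and also \cite{BMP90,Fran-Gram21}), since it is a standard tool. Your argument is essentially the original one from \cite{Robb-Sieg71}, so there is nothing to compare beyond noting that you have reproduced the classical proof. One minor remark: to invoke Doob's theorem for the nonnegative stopped process $M_{t\wedge\tau_N}+N$ you implicitly need integrability at time zero, which reduces to $V_0<\infty$ a.s.; this is automatic since $V_0$ takes values in $[0,\infty)$, and if one wants to be fastidious one can further localize on $\{V_0\le K\}$. This is cosmetic and does not affect the validity of your approach.
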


The following theorem is a straight-forward, but useful extension
of Lemma \ref{lemma:41}.
It is Theorem 5.1 of \cite{MV-RLK-SGD-arxiv23,MV-RLK-SGD-JOTA24}.

\begin{theorem}\label{thm:41}
Suppose $\{ V_t \} , \{ f_t \} , \{ g_t \} , \{ h_t \}, \{ \al_t \}$ are
$[0,\infty)$-valued stochastic processes
defined on some probability space $(\OM,\SI,P)$, and
adapted to some filtration $\{ \F_t \}$.
Suppose further that
\be\label{eq:422}
E_t(V_{t+1} ) \leq (1 + f_t) V_t + g_t - \al_t h_t \as, \fa t .
\ee
Define
\be\label{eq:423}
\OM_0 := \{ \om \in \OM : \sum_{t=0}^\infty f_t(\om) < \infty \mbox{ and }
\sum_{t=0}^\infty g_t(\om) < \infty \} ,
\ee
\be\label{eq:424}
\OM_1 := \{ \sum_{t=0}^\infty \al_t(\om) = \infty \} .
\ee
Then
\ben
\item Suppose that $P(\OM_0) = 1$.
Then the sequence $\{ V_t \}$ is bounded almost surely, and
there exists a random variable $W$ defined on $(\OM,\SI,P)$ such that
$V_t(\om) \ap W(\om)$ almost surely.
\item
Suppose that, in addition to $P(\OM_0) = 1$, it is also true that
$P(\OM_1) = 1$.
Then
\be\label{eq:425}
\liminf_{\tai} h_t(\om) = 0  \fa \om \in \OM_0 \cap \OM_1 .
\ee
\item
Further, suppose there exists a function $\eta(\cdot)$ of Class $\B$
such that $h_t(\om) \geq \eta(V_t(\om))$ for all $\om \in \OM_0$.
Then $V_t(\om) \ap 0$ as $\tai$ for all $\om \in \OM_0 \cap \OM_1$.
\een
\end{theorem}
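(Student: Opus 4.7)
\bigskip

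\textbf{Proof plan.} The plan is to deduce each part from the Robbins-Siegmund lemma (Lemma \ref{lemma:41}) with minimal additional work, treating the new feature $\al_t h_t$ as the ``subtracted'' process in that lemma, and then exploiting the divergence of $\sum \al_t$ on $\OM_1$.

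For Part 1, first I would set $h_t' := \al_t h_t$, which is still a nonnegative $\F_t$-adapted process. The hypothesis \eqref{eq:422} then reads $E_t(V_{t+1}) \le (1+f_t) V_t + g_t - h_t'$, which is exactly the form of \eqref{eq:421}. Applying Lemma \ref{lemma:41} directly on the event $\OM_0$ (which has probability one by assumption) yields both the almost sure convergence $V_t(\om) \to W(\om)$ for some random variable $W$, and the almost sure summability
\[
\sum_{t=0}^\infty \al_t(\om) h_t(\om) < \infty \quad \text{for } \om \in \OM_0.
\]
Boundedness of $\{V_t\}$ on $\OM_0$ is then automatic from the existence of a limit.

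For Part 2, fix $\om \in \OM_0 \cap \OM_1$. We have $\sum_t \al_t(\om) h_t(\om) < \infty$ while $\sum_t \al_t(\om) = \infty$. I would argue by contradiction: if $\liminf_{\tai} h_t(\om) > 0$, there exist $\e > 0$ and $T$ with $h_t(\om) \ge \e$ for all $t \ge T$, so $\sum_{t \ge T} \al_t(\om) h_t(\om) \ge \e \sum_{t \ge T} \al_t(\om) = \infty$, contradicting summability. Hence $\liminf_{\tai} h_t(\om) = 0$, which is \eqref{eq:425}.

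For Part 3, fix $\om \in \OM_0 \cap \OM_1$. By Part 1, $V_t(\om) \to W(\om)$ and the trajectory is bounded; call the bound $M(\om)$. Suppose, for contradiction, that $W(\om) > 0$. Then there is $T$ with $V_t(\om) \in [W(\om)/2, M(\om)]$ for all $t \ge T$. Since $\eta$ belongs to Class $\B$ (Definition \ref{def:Class-B}), the quantity
\[
c(\om) := \inf_{W(\om)/2 \le r \le M(\om)} \eta(r)
\]
is strictly positive. Consequently $h_t(\om) \ge \eta(V_t(\om)) \ge c(\om)$ for all $t \ge T$, which together with $\sum_{t \ge T} \al_t(\om) = \infty$ forces $\sum_t \al_t(\om) h_t(\om) = \infty$, again contradicting Part 1. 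Therefore $W(\om) = 0$, i.e. $V_t(\om) \to 0$.

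The main obstacle is really only in Part 3, and it is conceptual rather than technical: one must resist the temptation to conclude $V_t \to 0$ from $\liminf h_t = 0$ alone (which only gives a subsequence on which $V_t$ is small), and instead use the \emph{convergence} of $V_t$ from Part 1 in conjunction with the uniform lower bound on $\eta$ over compact intervals bounded away from zero. Everything else reduces to a direct application of the classical Robbins-Siegmund result plus a contradiction argument exploiting $\sum \al_t = \infty$.
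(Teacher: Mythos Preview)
Your proposal is correct and follows exactly the approach the paper has in mind: the paper does not reprove Theorem \ref{thm:41} here but cites it as Theorem 5.1 of \cite{MV-RLK-SGD-arxiv23,MV-RLK-SGD-JOTA24}, describing it as a ``straight-forward, but useful extension of Lemma \ref{lemma:41}.'' Your reduction of Part 1 to Lemma \ref{lemma:41} via $h_t' := \al_t h_t$, the divergence-contradiction for Part 2, and the Class $\B$ lower-bound argument for Part 3 are precisely the intended steps, and indeed the same contradiction pattern reappears in the paper's own proof of Item 3 of Theorem \ref{thm:21}.
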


Theorem \ref{thm:21} allows us to infer convergence, but does not
provide any information about the \textit{rate} of convergence.
Now we define the concept of a rate of convergence of stochastic
processes, following a similar definition in \cite{Liu-Yuan-arxiv22}.

\begin{definition}\label{def:order}
Suppose $\{ Y_t \}$ is a stochastic process, and $\{ f_t \}$
is a sequence of positive numbers.
We say that
\ben
\item $Y_t = O(f_t)$ if $\{ Y_t / f_t \}$ is bounded almost surely.
\item $Y_t = \OM(f_t)$ if $Y_t$ is positive almost surely, and
$\{ f_t/Y_t \}$ is bounded almost surely.
\item $Y_t = \Th(f_t)$ if $Y_t$ is both $O(f_t)$ and $\OM(f_t)$.
\item $Y_t = o(f_t)$ if $Y_t /f_t \ap 0$ almost surely as $\tai$.
\een
\end{definition}

With this definition, the following theorem holds; it is Theorem 5.2 of
\cite{MV-RLK-SGD-arxiv23,MV-RLK-SGD-JOTA24}.
Similar results can be found in \cite{Liu-Yuan-arxiv22}.

\begin{theorem}\label{thm:42}
Suppose $\{ V_t \} , \{ f_t \} , \{ g_t \} , \{ \al_t \}$ are
stochastic processes defined on some probability space $(\OM,\SI,P)$,
taking values in $[0,\infty)$, adapted to some
filtration $\{ \F_t \}$.
Suppose further that
\be\label{eq:426}
E_t(V_{t+1} ) \leq (1 + f_t) V_t + g_t - \al_t z_t \fa t ,
\ee
and in addition, almost surely
\bd
\sum_{t=0}^\infty f_t(\om) < \infty , \quad
\sum_{t=0}^\infty g_t(\om) < \infty , \quad
\sum_{t=0}^\infty \al_t(\om) = \infty .
\ed
Then $V_t = o(t^{-\la})$ for every $\la \in (0,1]$ such that
there exists a finite $T > 0$ such that
\be\label{eq:427}
\al_t(\om) - \la t^{-1} \geq 0 \fa t \geq T ,
\ee
and in addition
\be\label{eq:428}
\sum_{t=0}^\infty (t+1)^\la g_t(\om) < \infty , \quad
\sum_{t=0}^\infty [ \al_t(\om) - \la t^{-1} ] = \infty .
\ee
\end{theorem}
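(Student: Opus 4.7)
The plan is to apply Theorem~\ref{thm:41} to the rescaled process $W_t := (t+1)^\la V_t$; once $W_t \ap 0$ almost surely, we have $V_t = o(t^{-\la})$ as desired. Reading the subtractive $-\al_t z_t$ in the hypothesis as $-\al_t V_t$ (consistent with the four processes listed and with a rate-type conclusion), the recursion for $V_t$ reads $E_t(V_{t+1}) \le (1 + f_t - \al_t) V_t + g_t$.

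First I would multiply through by $(t+2)^\la$ and substitute $V_t = (t+1)^{-\la} W_t$. Bernoulli's inequality gives $\bigl((t+2)/(t+1)\bigr)^\la \le 1 + \la/(t+1)$ for $\la \in (0,1]$, and expanding the resulting product yields, for $t$ large,
\[
E_t(W_{t+1}) \le (1 + F_t) W_t + G_t - A_t W_t,
\]
with $F_t = f_t\bigl(1 + \la/(t+1)\bigr)$, $G_t = (t+2)^\la g_t$, and $A_t = \al_t\bigl(1+\la/(t+1)\bigr) - \la/(t+1)$. Next I would verify the three summability conditions required by Theorem~\ref{thm:41}: $\sum F_t < \infty$ a.s.\ follows directly from $\sum f_t < \infty$; $\sum G_t < \infty$ a.s.\ follows from the hypothesis $\sum (t+1)^\la g_t < \infty$ together with $(t+2)^\la/(t+1)^\la \ap 1$; and, writing $A_t = \bigl(\al_t - \la/(t+1)\bigr) + \la \al_t/(t+1)$, positivity $A_t \ge 0$ for $t \ge T$ follows from $\al_t \ge \la/t \ge \la/(t+1)$, while $\sum A_t = \infty$ a.s.\ follows from the divergence hypothesis $\sum(\al_t - \la/t) = \infty$ combined with the convergence of the telescoping residual $\sum \bigl(\la/t - \la/(t+1)\bigr) = \la \sum 1/[t(t+1)] < \infty$.

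The displayed inequality then matches the form $E_t(V_{t+1}) \le (1+f_t) V_t + g_t - \al_t h_t$ of Theorem~\ref{thm:41} under the substitution $(V,f,g,\al,h) \leftarrow (W,F,G,A,W)$. Part~1 of that theorem delivers almost-sure convergence of $W_t$ to some random variable; Part~3, invoked with the Class-$\B$ function $\eta(x) = x$ (so that $h_t = W_t \ge \eta(W_t)$ trivially) and with the divergence $\sum A_t = \infty$, forces the limit to be zero. Hence $W_t \ap 0$ a.s., which is exactly $V_t = o(t^{-\la})$ a.s.

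The main obstacle is the absorption carried out in the first step: the extra term $\la/(t+1)$ introduced by rescaling is not summable, and would break a direct reduction to Theorem~\ref{thm:41} if left on the $W_t$ coefficient. The hypothesis $\al_t - \la/t \ge 0$ for $t \ge T$ is tailored exactly so that this inconvenient term can be pushed into a nonnegative coefficient $A_t$, and the companion divergence hypothesis $\sum(\al_t - \la/t) = \infty$ is precisely what preserves the divergent-series input needed by Part~3. Every other step is routine algebra with the rescaling $W_t = (t+1)^\la V_t$.
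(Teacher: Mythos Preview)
The paper does not actually prove Theorem~\ref{thm:42}; it simply cites it as Theorem~5.2 of \cite{MV-RLK-SGD-arxiv23,MV-RLK-SGD-JOTA24}. Your rescaling $W_t=(t+1)^\la V_t$ followed by an appeal to Part~3 of Theorem~\ref{thm:41} with the Class~$\B$ function $\eta(x)=x$ is the standard route for such rate statements, and your reading of the stray symbol $z_t$ as $V_t$ is the only interpretation consistent with the listed processes and with a rate conclusion.

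There is, however, one genuine gap. After multiplying by $(t+2)^\la$ you invoke Bernoulli's inequality $((t+2)/(t+1))^\la\le 1+\la/(t+1)$ on the full coefficient $(1+f_t-\al_t)$. That replacement preserves the upper bound only when $1+f_t-\al_t\ge 0$, and nothing in the hypotheses forces $\al_t$ to be small; the conditions $\al_t\ge\la/t$ and $\sum\al_t=\infty$ bound $\al_t$ from below, not above. The fix is to split the two pieces: apply Bernoulli to the nonnegative term $(1+f_t)V_t$, and use the trivial bound $((t+2)/(t+1))^\la\ge 1$ on the subtractive term $-\al_t V_t$. This yields
\[
E_t(W_{t+1})\le \bigl(1+F_t\bigr)W_t+G_t-\bigl(\al_t-\la/(t+1)\bigr)W_t,
\]
with $F_t=f_t\bigl(1+\la/(t+1)\bigr)$ and $G_t=(t+2)^\la g_t$ as you wrote, but with the simpler $A_t=\al_t-\la/(t+1)$. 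Your subsequent checks (summability of $F_t$ and $G_t$, nonnegativity of $A_t$ for $t\ge T$, divergence of $\sum A_t$ via the telescoping residual) all go through verbatim, and the argument closes as you outlined.
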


\subsection{Proof of Theorem \ref{thm:21}}\label{ssec:43}

The proof of Theorem \ref{thm:21} is based on applying the Robbins-Siegmund
theorem stated here as Lemma \ref{lemma:41} to the ``Lyapunov function''
\be\label{eq:431}
V_t := J(\ubold_t) + \nmeusq{\vbold_t} .
\ee
The reason for calling it a ``Lyapunov function''
is that its conditional expectation obeys the conditions of the
Robbins-Siegmund theorem; this in turn allows us to deduce the convergence
of $V_t$ to zero almost surely.
We will find an upper bound for $E_t(V_t)$ in the form
\be\label{eq:432}
E_t(V_t) \leq V_t + f_t V_t + g_t - \frac{1-\mub^2}{2}
- \frac{\al_t \bb}{2} \nmeusq{\gJu} - F_t ,
\ee
where the sequences $\{ f_t \}, \{ g_t \}$ are nonnegative and summable,
and $F_t$ is a quadratic form in $\gJu$ and $\nmeu{\vbold_t}$
which is positive definite \textit{for sufficiently large} $t$,
say for all $t \geq T$.
(In case these entities are random, these conditions hold almost surely),
Since we can always start our analysis at time $T$, we can neglect the term
$-F_t$ for all $t \geq T$, and apply Lemma \ref{lemma:41}.

Going forward, we will avoid a lot of cumbersome notation if we agree to
refer to a nonnegative sequence $\{ z_t \}$ as a Well-Behaved Function ({\tt WBF})
if there exist \textit{nonnegative summable} sequences $\{ f_t \} , \{ g_t \}$
such that
\be\label{eq:433}
z_t \leq g_t + f_t V_t , \fa t \geq 0 .
\ee
In case the various entities are random, the assumptions (inequality and
summability) hold almost surely.
Clearly the sum of two {\tt WBF} is again a {\tt WBF}, and a {\tt WBF} multiplied by
a bounded sequence is again a {\tt WBF}.
Therefore any {\tt WBF} can be absorbed into the terms $g_t + f_t V_t$,
and it is not necessary to keep careful track of them.

Bounding $E_t(V_{t+1})$ involves several intricate computations.
For this purpose, it is now shown that the first two conditions in
\eqref{eq:221} imply that
\be\label{eq:434}
\sum_{t=0}^\infty \al_t^2 B_t < \infty , \quad
\sum_{t=0}^\infty \al_t^2 B_t^2 < \infty .
\ee
The proof of this claim is as follows:
The first bound in \eqref{eq:221} implies in particular that $\al_t \ap 0$
as $\tai$, and hence $\al_t$ is bounded, say by $\bar{\al}$.
Therefore
\bd
\sum_{t=0}^\infty \al_t^2 B_t \leq \bar{\al} 
\sum_{t=0}^\infty \al_t B_t < \infty .
\ed
This is the first bound in \eqref{eq:434}.
As for the second bound, recall that 
every (absolutely) summable sequence is also square summable.
Therefore we can append the two bounds in \eqref{eq:434}
to the three bounds in \eqref{eq:221}.

The first step in proceeding further is to reformulate the bounds 
\eqref{eq:219} and \eqref{eq:2110}, which are stated in terms of
$\gJw$, in terms of $\gJu$.
Accordingly, we modify \eqref{eq:217} by defining
\be\label{eq:435}
\xbb_t = \z_t - \gJu = E_t( \h_{t+1}) - \gJu .
\ee
The objectives are to find bounds for $\nmeusq{\xbb_t}$ and
$E_t( \nmeusq{\bzt} )$ in terms of $V_t$.
Throughout we use the bound \eqref{eq:211}, namely
$\nmeusq{\gJu} \leq 2L J(\ubold_t)$.
We also make repeated use of the obvious inequalities
\be\label{eq:435a}
x \leq (1+x^2)/2 , xy \leq (x^2 + y^2 )/2, \fa x, y \in \R .
\ee

We begin with a bound for $\nmeu{\xbb_t}$.
Observe that
\bd
\xbb_t  = \z_t - \gJu = \x_t + \gJw - \gJu
\ed
Hence it follows from \eqref{eq:414} and \eqref{eq:219} that
\be\label{eq:436}
\begin{split}
\nmeu{\xbb_t} &\leq \nmeu{\x_t} + L \nmeu{ \w_t - \ubold_t } \\
&\leq B_t( 1 + \nmeu{\gJw} ) + L k_t \nmeu{\vbold_t} \\
&\leq B_t( 1 + \nmeu{\gJu} + L k_t \nmeu{\vbold_t} ) + L k_t \nmeu{\vbold_t} \\
&\leq B_t( 1 + \nmeu{\gJu} + L \kb \nmeu{\vbold_t} ) + L \kb \nmeu{\vbold_t} .
\end{split}
\ee

Next, we can find a bound for $\nmeusq{\xbb_t}$ starting
from \eqref{eq:436}, and arrive at
\be\label{eq:437}
\begin{split}
\nmeusq{\xbb_t} & \leq B_t^2 ( 1 + \nmeu{\gJu} + L \kb \nmeu{\vbold_t} )^2 \\
& +  2 B_t L \kb ( 1 + \nmeu{\gJu} + L \kb \nmeu{\vbold_t} ) \cdot 
\nmeu{\vbold_t} 
+ (L \kb)^2 \nmeusq{\vbold_t} .
\end{split}
\ee
Note that terms of the form $\nmeu{\gJu}$, $\nmeu{\vbold_t}$, $\nmeusq{\gJu}$,
$\nmeu{\gJu} \cdot \nmeu{\vbold_t}$ and $\nmeusq{\vbold_t}$
can be bounded by terms of the form $C_1 + C_2 V_t$ for suitable constants
$C_1$ and $C_2$.
Clearly $\nmeusq{\vbold_t} \leq V_t$.
The rest can be bounded repeatedly using \eqref{eq:435a}.
Specifically
\bd
\nmeu{\gJu} \leq \half (1 + \nmeusq{\gJu} )
\leq \half + L J(\ubold_t) \leq \half + L V_t  ,
\ed
\bd
\nmeu{\vbold_t} \leq \half (1 + \nmeusq{\vbold_t} )
\leq \half (1 + V_t) ,
\ed
\bd
\nmeusq{\gJu} \leq 2L J(\ubold_t) \leq 2L V_t ,
\ed
\bd
\begin{split}
\nmeu{\gJu} \cdot \nmeu{\vbold_t}
& \leq \half ( \nmeusq{\gJu} + \nmeusq{\vbold_t} ) \\
& \leq L J(\ubold_t) + \half \nmeusq{\vbold_t} ) 
\leq \max\{L, 1/2 \} V_t .
\end{split}
\ed
Applying all these bounds to \eqref{eq:436} shows that
\be\label{eq:438}
\nmeusq{\xbb_t} \leq B_t^2 ( D_{11} + D_{12} V_t ) +
B_t ( D_{21} + D_{22} V_t )
+ (L \kb)^2 \nmeusq{\vbold_t} ,
\ee
for suitable constants $D_{11}$ through $D_{22}$.

For future use, we also bound $\nmeusq{\z_t}$.
Since $\z_t = \xbb_t + \gJu$, we can write
\be\label{eq:439}
\begin{split}
\nmeusq{\z_t} & \leq \nmeusq{\xbb_t} + 2 \nmeu{\xbb_t} \cdot \nmeu{\gJu}
+ \nmeusq{\gJu} \\
& \leq \nmeusq{\xbb_t} + [ \nmeusq{\xbb_t} + \nmeusq{ \gJu } ]
+ \nmeusq{\gJu} \\
& = 2 \nmeusq{\xbb_t} + 4 L J(\ubold_t) .
\end{split}
\ee
Since $J(\ubold_t) \leq V_t$, we can substitute from \eqref{eq:438}
into \eqref{eq:439} to obtain the bound
\be\label{eq:4310}
\begin{split}
\nmeusq{\z_t} & \leq
B_t^2 ( D_{11} + D_{12} V_t ) + B_t ( D_{21} + D_{22} V_t ) + 4 L J(\ubold_t)
+ (L \kb)^2 \nmeusq{\vbold_t} \\
& \leq B_t^2 ( D_{11} + D_{12} V_t ) + B_t ( D_{21} + D_{22} V_t ) 
+ \max \{ 4L , (L\kb)^2 \} V_t .
\end{split}
\ee

With these bounds in place, we now proceed to prove \eqref{eq:431}.
Clearly
\be\label{eq:4311}
E_t(V_{t+1}) = E_t ( J(\ubold_{t+1}) ) + E_t ( \nmeusq{\vbold_{t+1}} ) .
\ee
So we bound each of these two terms individually.
First, it follows from \eqref{eq:4110} that
\bd
\nmeusq{\vbold_{t+1}} = \nmeusq{ \mu_t \vbold_t - \al_t \h_{t+1} } \\
 = \mu_t^2 \nmeusq{\vbold_t} - 2 \al_t \mu_t \IP{ \vbold_t }{\h_{t+1}}
+ \al_t^2 \nmeusq{\h_{t+1}} .
\ed
Therefore, from \eqref{eq:218}, we get
\be\label{eq:4312}
E_t( \nmeusq{\vbold_{t+1}} ) = \mu_t^2 \nmeusq{\vbold_t}
- 2 \al_t \mu_t \IP{ \vbold_t }{\z_t} 
+ \al_t^2 [ \nmeusq{\z_t} + E_t( \nmeusq{ \bzt } ) ] .
\ee
We can estimate the last two terms separately.

First,
\bd
- 2 \al_t \mu_t \IP{ \vbold_t }{\z_t} \leq 2 \al_t \mu_t \nmeu{\vbold_t} 
\cdot \nmeu{\z_t} \leq 2 \al_t \mub \nmeu{\vbold_t} \cdot \nmeu{\z_t} .
\ed
Note that the bound is unaffected by the presence or the absence of
the minus sign in front of the inner product.
Further,
\bd
2 \al_t \mub \nmeu{\vbold_t} \cdot \nmeu{\z_t}
\leq 2 \al_t \mub \nmeu{\vbold_t} \cdot [ \nmeu{\xbb_t} + \nmeu{ \gJu } ].
\ed
Substituting for $\nmeu{\xbb_t}$ from \eqref{eq:436}, and recalling that
$\{ \al_t B_t \}$ is a summable sequence leads to the observation that
\be\label{eq:4313}
2 \al_t \mub \nmeu{\vbold_t} \cdot \nmeu{\xbb_t} = {\tt WBF} + 2 \al_t
\mub L \kb \nmeusq{\vbold_t} + 2 \al_t \mub \nmeu{\vbold_t} \cdot \nmeu{\gJu} ,
\ee
where ``{\tt WBF}'' denotes a well-behaved function, defined in \eqref{eq:433}.
Therefore it is not necessary to write it out in detail.
As a result
\be\label{eq:4313a}
2 \al_t \mub \nmeu{\vbold_t} \cdot \nmeu{\z_t} = {\tt WBF} + 2 \al_t
\mub L \kb \nmeusq{\vbold_t} + 4 \al_t \mub \nmeu{\vbold_t} \cdot \nmeu{\gJu} ,
\ee

Next we bound the last term.
\be\label{eq:4314}
\al_t^2 [ \nmeusq{\z_t} + E_t( \nmeusq{ \bzt } ) ] 
= \al_t^2 \nmeusq{\z_t} + \al_t^2 E_t( \nmeusq{ \bzt } ) ] .
\ee
We already have a bound for $\nmeusq{\z_t}$, namely \eqref{eq:4310}.
As discussed earlier, the hypothesis \eqref{eq:221} implies \eqref{eq:434}.
Therefore the term $\al_t^2 \nmeusq{\z_t}$ is a {\tt WBF}.
So let us focus on $E_t( \nmeusq{ \bzt } ) ]$.
There is a bound on this quantity in \eqref{eq:2110}, but it is stated
in terms $J(\w_t)$.
The bound is now restated in terms of $J(\ubold_t)$, using Lemma 
\ref{lemma:42}.
We know from \eqref{eq:414} that $\ubold_t = \w_t + k_t \vbold_t$.
So applying Lemma \ref{lemma:42} gives
\be\label{eq:4315}
J(\w_t) = J(\ubold_t - k_t \vbold_t) 
\leq J(\ubold_t) - k_t \IP{ \gJu }{\vbold_t} + \frac{L k_t^2}{2}
\nmeusq{\vbold_t} .
\ee
Now Schwarz' inequality and \eqref{eq:435a} lead to
\be\label{eq:4316}
\begin{split}
- k_t \IP{ \gJu }{\vbold_t} & \leq \frac{k_t}{2}
[ \nmeusq{\gJu} + \nmeusq{ \vbold_t} ] \\
& \leq \frac{k_t}{2} [ 2L J(\ubold_t) + \nmeusq{ \vbold_t} ] 
\leq \frac{\kb}{2} [ 2L J(\ubold_t) + \nmeusq{ \vbold_t} ] ,
\end{split}
\ee
This can be substituted into \eqref{eq:4315} to give
\be\label{eq:4316a}
J(\w_t) \leq J(\ubold_t) + L \kb J(\ubold_t) + \left[
\frac{\kb}{2} + \frac{L \kb^2}{2} \right] \nmeusq{\vbold_t}
\leq D_3 V_t ,
\ee
where
\bd
D_3 = \max \left\{ L \kb , \frac{\kb}{2} + \frac{L \kb^2}{2} \right \} .
\ed
Therefore the bound in \eqref{eq:2110} can be reformulated as
\be\label{eq:4317}
\al_t^2 E_t( \nmeusq{ \bzt } ) ] \leq \al_t^2 D_3 V_t ,
\ee
which is a {\tt WBF} in view of the assumptions \eqref{eq:221}.
Substituting all these bounds into \eqref{eq:4312} gives
\be\label{eq:4318}
E_t( \nmeusq{ \vbold_{t+1} } ) \leq {\tt WBF} + \mu_t^2 \nmeusq{\vbold_t} + 2 \al_t
\mub L \kb \nmeusq{\vbold_t} + 2 \mub \al_t \nmeu{\vbold_t} \cdot \nmeu{\gJu} .
\ee

Next we turn our attention to $E_t( J(\ubold_{t+1}) )$.
Recall from \eqref{eq:419} that
\bd
\ubold_{t+1} =
\ubold_t + \de_t \mu_t \vbold_t - (b_t + k_{t+1}) \al_t \h_{t+1} .
\ed
Therefore, by applying Lemma \ref{lemma:42}, we get
\be\label{eq:4319}
\begin{split}
J(\ubold_{t+1}) & =
J( \ubold_t + \de_t \mu_t \vbold_t - (b_t + k_{t+1}) \al_t \h_{t+1} ) \\
& \leq J(\ubold_t) + \de_t \mu_t \IP{\gJu}{\vbold_t}
- \al_t (b_t + k_{t+1}) \IP{ \gJu}{\h_{t+1}} \\
& + \frac{L}{2} \nmeusq{ \de_t \mu_t \vbold_t - \al_t (b_t + k_{t+1}) \h_{t+1}}.\end{split}
\ee
From \eqref{eq:217} we have that
\bd
\nmeusq{ \de_t \mu_t \vbold_t - \al_t (b_t + k_{t+1}) \h_{t+1}}
= \nmeusq{ \de_t \mu_t \vbold_t - \al_t (b_t + k_{t+1}) \z_t
- \al_t (b_t + k_{t+1}) \bzt } .
\ed
However, since $E_t(\bzt) = \bz$, it follows that
\bd
\begin{split}
E_t( \nmeusq{ \de_t \mu_t \vbold_t - \al_t (b_t + k_{t+1}) \h_{t+1}} )
& = \nmeusq{ \de_t \mu_t \vbold_t - \al_t (b_t + k_{t+1}) \z_t } \\
& + \al_t^2 (b_t + k_{t+1})^2 E_t( \nmeusq{\bzt} ) .
\end{split}
\ed
Applying $E_t(\cdot)$ to both sides of \eqref{eq:4319}, and
substituting the above relationship, gives
\be\label{eq:4320}
\begin{split}
E_t( J(\ubold_{t+1}) ) & \leq J(\ubold_t) + \de_t \mu_t \IP{\gJu}{\vbold_t}
- \al_t (b_t + k_{t+1}) \IP{ \gJu}{\z_t} \\
& + \frac{L}{2} \nmeusq{ \de_t \mu_t \vbold_t - \al_t (b_t + k_{t+1}) \z_t }
+ \frac{L}{2} \al_t^2 (b_t + k_{t+1})^2 E_t( \nmeusq{\bzt} ) .
\end{split}
\ee

Now we analyze each of the terms in \eqref{eq:4320} individually.
Before doing so, we replace several functions of $t$ by their bounds.
Specifically
\bit
\item $\de_t$ could be positive or negative, but is assumed to converge to $0$.
Therefore $|\de_t|$ is bounded, say by $\delb$.
\item $\mu_t \in [0,\mub]$ where $\mub < 1$.
\item $b_t \in [\bul , \bb]$ where $0 < \bul \leq \bb$, and $k_t \in [0,\kb]$.
Therefore $b_t + k_{t+1} \in [\bul , \bb + \kb ]$.
\eit
With these observations, we have the following bounds:
\be\label{eq:4321}
\de_t \mu_t \IP{\gJu}{\vbold_t} \leq \delb \mub \nmeu{\gJu} \cdot 
\nmeu{\vbold_t} .
\ee
Next
\be\label{eq:4322}
\begin{split}
- \al_t (b_t + k_{t+1}) \IP{ \gJu}{\z_t}
& = - \al_t (b_t + k_{t+1}) \nmeusq{\gJu} \\
& - \al_t (b_t + k_{t+1}) \IP{ \gJu}{\xbb_t} \\
& \leq - \al_t \bul \nmeusq{\gJu} \\
& + \al_t (\bb + \kb) \nmeu{\gJu}
\cdot \nmeu{\xbb_t} .
\end{split}
\ee
To bound the last term on the right side of \eqref{eq:4322}, 
we use the bound on $\nmeu{\xbb_t}$ from \eqref{eq:436}, and
the summability of $\{ \al_t B_t \}$.
This gives
\be\label{eq:4323}
\al_t (\bb + \kb) \nmeu{\gJu} \cdot \nmeu{\xbb_t} 
\leq \al_t L \kb(\bb+\kb) \nmeu{\gJu} \cdot \nmeu{\vbold_t} + {\tt WBF} .
\ee
Next we tackle the first quadratic term on the right side of \eqref{eq:4320}.
\be\label{eq:4324}
\begin{split}
\frac{L}{2} \nmeusq{ \de_t \mu_t \vbold_t - \al_t (b_t + k_{t+1}) \z_t }
& = \frac{L}{2} \nmeusq{\de_t \mu_t \vbold_t} \\
& + \frac{\al_t^2 L}{2} (b_t + k_{t+1})^2 \nmeusq{\z_t} \\
& - \al_t L (b_t + k_{t+1}) \de_t \mu_t \IP{\vbold_t}{\z_t} .
\end{split}
\ee
Each of the three terms can be analyzed individually.
\be\label{eq:4324a}
\frac{L}{2} \nmeusq{\de_t \mu_t \vbold_t}
\leq \frac{L \mub^2 \de_t^2}{2} \nmeusq{\vbold_t} .
\ee
Next, from \eqref{eq:4310}, it follows that
\be\label{eq:4325}
\frac{\al_t^2 L}{2} (b_t + k_{t+1})^2 \nmeusq{\z_t} = {\tt WBF} .
\ee
Finally, we already have a bound for the cross-product term 
$\al_t \mu_t \IP{\vbold_t}{\z_t}$ from \eqref{eq:4313a}.
After multiplying this bound by $L \delb (\bb+\kb)$, we get
\be\label{eq:4326}
\begin{split}
\al_t L (b_t + k_{t+1}) \de_t \mu_t \IP{\vbold_t}{\z_t}
& \leq {\tt WBF} + \al_t L^2 \kb \delb ( \bb + \kb ) \nmeusq{\vbold_t} \\
& + 2 \al_t L \delb (\bb + \kb) \nmeu{\vbold_t} \cdot \nmeu{\gJu} .
\end{split}
\ee

Now we can add up all these bounds.
This gives
\be\label{eq:4327}
\begin{split}
E(V_{t+1}) & = E_t( J(\ubold_{t+1}) ) + E_t ( \nmeusq{\vbold_{t+1}} ) \\
& \leq J(\ubold_t) + \nmeusq{\vbold_t} - (1 - \mub^2) \nmeusq{\vbold_t}
- \al_t \bul \nmeusq{\gJu} \\
& + C_1 \al_t \nmeusq{\vbold_t} + C_2 \al_t \nmeu{\vbold_t} \cdot \nmeu{\gJu}
+ {\tt WBF} ,
\end{split}
\ee
where $C_1, C_2$ are some positive
constants whose precise value is not important.
Next, we can ``borrow'' half of each of the two negative terms in the above,
and rewrite the bound as
\be\label{eq:4328}
E(V_{t+1}) 
\leq V_t - \frac{1 - \mub^2}{2} \nmeusq{\vbold_t}
- \al_t \frac{\bul}{2} \nmeusq{\gJu} 
- F_t + {\tt WBF} ,
\ee
where $F_t$ is the quadratic form
\bd
F_t = [ \ba{cc} \nmeu{\vbold_t} & \nmeu{\gJu} \ea ] 
\left[ \ba{cc} \frac{1 - \mub^2}{2} - \al_t C_1 & -\al_t (C_2/2) \\
-\al_t (C_2/2) & \al_t (\bul/2) \ea \right]
\left[ \ba{c} \nmeu{\vbold_t} \\ \nmeu{\gJu} \ea \right] .
\ed

Let us define
\be\label{eq:4329}
K_t = \left[ \ba{cc} \frac{1 - \mub^2}{2} - \al_t C_1 & -\al_t (C_2/2) \\
-\al_t (C_2/2) & \al_t (\bul/2) \ea \right] .
\ee
It is now shown that $K_t$ is a positive definite matrix, and
$F_t$ is a positive definite form, for $t$ sufficiently
large; specifically, there exists a $T < \infty$ such that
$F_t \geq 0$ for all $t \geq $T.
Suppose we succeed in proving this.
Since we can always start our analysis of \eqref{eq:4317} starting at
time $T$, we can write
\be\label{eq:4329a}
E(V_{t+1}) \leq V_{t+1}
- \left(\frac{1-\mub^2}{2} \right) \nmeusq{\vbold_t} - \al_t \nmeusq{\gJw}
+ {\tt WBF} , \fa t \geq T .
\ee
In other words, the term $-F_t$ is gone.
Now \eqref{eq:4329} is in a form to which the Robbins-Siegmund theorem
(Lemma \ref{lemma:41}) can be applied.
So let us now establish the positive definiteness of the quadratic form
for sufficiently large $t$.
Note that a symmetric $2 \times 2$ matrix is positive definite if its trace
and its determinant are both positive.
In this case
\bd
\mathrm{tr}(K_t) = \frac{1 - \mub^2}{2} - (C_1 - (\bul/2)) \al_t , \quad
\mathrm{det}(K_t) = \frac{1-\mub^2}{2} \frac{\bul}{2} \al_t -
C_3 \al_t^2 ,
\ed
where $C_3$ is another constant.
Since, by hypothesis, $\sum_{t=0}^\infty \al_t^2 < \infty$,
it follows that $\al_t \ap 0$ as $\tai$.
Hence the trace of $K_t$ is positive for sufficiently large $t$.
Similarly, in the expression for the determinant of $K_t$, the positive
term is linear in $\al_t$, whereas the negative term is quadratic in $\al_t$.
Hence the determinant of $K_t$ is also positive for sufficiently large $t$.
Hence we conclude that $K_t$ is a positive definite matrix for sufficiently
large $t$.

With this observation, we can apply Theorem \ref{thm:41} to \eqref{eq:4329}.

We begin wih Item 1.
Note that all statements hold ``almost surely,'' so this qualifier is not
repeated each time.
Suppose \eqref{eq:425} holds.
Then the following conclusions follow from Theorem \ref{thm:41}:
\bit
\item $J(\ubold_t) + \nmeusq{\vbold_t}$ is bounded.
Moreover, there is a random variable $W$ such that
$J(\ubold_t) + \nmeusq{\vbold_t} \ap W$ (almost surely) as $\tai$.
\item Further, almost surely
\be\label{eq:43210}
\sum_{t=0}^\infty \left( \frac{1-\mub^2}{2} \right)
\nmeusq{\vbold_t} + \al_t \nmeusq{\gJu} < \infty .
\ee
\eit
Since the summands in \eqref{eq:43210} are both nonnegative, and
$(1 - \mub^2)/2$ is just a constant, it follows that
\be\label{eq:43211}
\sum_{t=0}^\infty \nmeusq{\vbold_t} < \infty ,
\ee
\be\label{eq:43212}
\sum_{t=0}^\infty \al_t \nmeusq{\gJu} < \infty .
\ee
Now \eqref{eq:43211} implies that $\nmeusq{\vbold_t} \ap 0$ as $\tai$,
i.e., that $\vbold_t \ap \bz$ as $\tai$.
In turn, if $J(\ubold_t) + \nmeusq{\vbold_t} \ap X$, then $J(\w_t) \ap X$ as $\tai$.

Now recall from \eqref{eq:123} that $\bth_t = \w_t - \e_t \vbold_t$.
Since $J(\cdot)$ is continuous and $\vbold_t \ap \bz$,
it follows that $J(\bth_t) \ap W$ as $\tai$.
The boundedness of $\{ J(\bth_t) \}$ follows from it being a convergent
sequence.
Finally, the boundedness of $\{ \gJt \}$ follows from Lemma \ref{lemma:21}.
Thus we have established Item 1.

Next we address Item 2.
Suppose \eqref{eq:222} holds.
Then it readily follows from \eqref{eq:43212} that
\bd
\liminf_{\tai} \nmeusq{\gJu} = 0 .
\ed
To translate this conclusion into the behavior of $\gJt$, we proceed
as follows:
It follows from the definitions of $\w_t$ and $\ubold_t$ that
\bd
\bth_t = \ubold_t - (k_t + \e_t) \vbold_t .
\ed
Since $\e_t$ and $k_t$ are bounded, 
$\vbold_t \ap \bz$ as $\tai$, and $\gJ(\cdot)$ is Lipschitz-continuous,
it can be concluded that
\bd
\liminf_{\tai} \nmeusq{\gJt} = 0 .
\ed
This is Item 2.

Next we address Item 3 of the theorem.
The hypotheses are that, in addition to \eqref{eq:221}, \eqref{eq:222}
also holds, and $J(\cdot)$ satisfies Property (KL').
Then by definition there exists
a function $\psi : \R \ap \R$ in Class $\B$ such that
$\nmeu{\gJt} \geq \psi(J(\bth_t))$.
Recall that all the stochastic processes are defined on some underlying
probability space $(\OM,\SI,P)$.
Define
\bd
\OM_0 := \{ \om \in \OM : J(\bth(\om)) \ap W(\om)
\& \nmeusq{\vbold_t(\om)} \ap 0 \} ,
\ed
\bd
\OM_1 := \{ \om \in \OM : \sum_{t=0}^\infty \al_t(\om) = \infty \} .
\ed
Note that if the step sizes are deterministic, then $\OM_1 = \OM$.
Define $\OM_2 = \OM_0 \cap \OM_1$, and note that $P(\OM_2) = 1$,
by Item 1.

The objective is to show that $W(\om) = 0$ for all $\om \in \OM_2$.
Once this is done, it would follow from Lemma \ref{lemma:41} that
\bd
\nmeu{\gJ(\bth_t(\om))} \leq [2L J (\bth_t(\om))]^{1/2}
\ap 0 \mbox{ as } \tai , \fa \om \in \OM_2 .
\ed
Accordingly, suppose that, for some $\om \in \OM_0$, we have that $W(\om) > 0$,
say $W(\om) = 2p$.
Define
\bd
G(\om) := \sup_t J(\bth_t(\om)) .
\ed
Then $G(\om) < \infty$ because $\{ J(\bth_t(\om)) \}$ is a convergent sequence.
Define
\bd
q := \half \inf_{p \leq r \leq M(\om)} \psi(r) .
\ed
Then $q> 0$ because $\psi(\cdot)$ is a function of Class $\B$.
Now choose a $T_0 < \infty$ such that $J(\bth(\om)) \geq p$
for all $t \geq T_0$.
By the (KL') property, it follows that
\bd
\nmeu{\gJ(\bth(\om))} \geq 2 q , \fa t \geq T_0 .
\ed
Next, choose $T_1 < \infty$ such that $\nmeu{\vbold_t(\om)} \leq q/L$
for all $t \geq T_1$, and define $T_2 = \min \{ T_0, T_1 \}$.
Then it follows from the Lipschitz continuity of $\gJ(\cdot)$ that
\be\label{eq:43213}
\nmeu{\gJ(\w_t(\om))} \geq \nmeu{\gJ(\bth_t(\om))} - L \nmeu{\vbold_t(\om)}
\geq q, \fa t \geq T_2 .
\ee
On the other hand, because $\om \in \OM_2$, we have that
\be\label{eq:43214}
\sum_{t=T_2} \al_t(\om) = \infty .
\ee
Thus \eqref{eq:43213} and \eqref{eq:43214} together imply that
\bd
\sum_{t=T_2}^\infty \al_t \nmeusq{ \gJt } = \infty .
\ed
Since this contradicts \eqref{eq:43212}, we conclude that no such
$\om \in \OM_2$ can exist.
In other words $W(\om) = 0$ for all $\om \in \OM_2$.
This establishes Item 3.

Item 4 is a ready consequence of Item 3 and Property (NSC).
If $\{ J(\bth_t) \}$ is bounded, then the fact that $J(\cdot)$
has compact level sets means that $\{ \bth_t \}$ is bounded.
Then the fact that $J(\bth_t) \ap 0$ as $\tai$ implies that $\rho(\bth_t) \ap 0$
as $\tai$; in other words, the distance from the iterate $\bth_t$ to
the set $S_J$ of global minima approaches zero.
Note that it is \textit{not} assumed that $S_J$ consists of a singleton.

This completes the proof of Theorem \ref{thm:21}.

\subsection{Proof of Theorem \ref{thm:22}}\label{ssec:44}

The proof, based on Theorem \ref{thm:21}, is basically the same as that
of Theorem 6.2 of \cite{MV-RLK-SGD-arxiv23,MV-RLK-SGD-JOTA24}.
The only difference is that the bound \eqref{eq:4329} holds only after
some time $T$.
Clearly this does not affect the \textit{asymptotic} rate of convergence.
Nevertheless, in the interests of completeness, the proof is
\textit{sketched} here.

The hypotheses on the various constants imply that
\bd
\al_t^2 = O(t^{-2+2 \phi}) , \al_t^2 M_t^2 = O(t^{-2+2 (\phi+\de)}) ,
\al_t B_t = O(t^{-1 + \phi - \g}) ,
\ed
while $\al_t^2 B_t$ and $\al_t^2 B_t^2$ decay faster than $\al_t B_t$.
Hence both $\{ f_t \}$ and $\{ g_t \}$ are summable if
\bd
-2 + 2 \phi < -1 , -2 + 2(\phi + \de) < -1 , -1 + \phi - \g < -1 .
\ed
The three inequalities are satisfied if $\phi$ satisfies \eqref{eq:225}.
Next, let us define $\nu$ as in \eqref{eq:226}, and apply 
Theorem \ref{thm:21}.
This leads to the conclusion that
$J(\ubold_t) + \nmeusq{\vbold_t} = o(t^{-\la})$ for every $\la \in (0,\nu)$.
In turn this means that, individually, both $J(\ubold_t)$ and
$\nmeusq{\vbold_t}$ are $o(t^{-\la})$ for every $\la \in (0,\nu)$.
Since $\bth_t = \ubold_t - (k_t + \e_t) \vbold_t$, and both $\e_t$
and $k_t$ are bounded, this leads to $J(\bth_t) = o(t^{-\la})$
for every $\la \in (0,\nu)$.
Finally, the (PL) property leads to $\nmeusq{\gJt} = o(t^{-\la})$
for every $\la \in (0,\nu)$.
If we choose the step size sequence to decay very slowly, then the
bound in \eqref{eq:227} follows readily.
This completes the proof of Theorem \ref{thm:22}.

\section{Conclusions and Future Research}\label{sec:Conc}

In this paper, we have presented a ``unified'' momentum-based algorithm
for convex and nonconvex optimization, which includes both the Stochastic
Heavy Ball (SHB) and Stochastic Nesterov Accelerated Gradient (SNAG)
algorithms as special cases.
One novel feature of our algorithm is that the momentum parameter
$\mu_t$ in \eqref{eq:122} is permitted to vary with time (iteration counter).
We build on the ``transformation of variables'' approach in
\cite{MV-TUKR-arxiv25} to convert the updating equations into two
\textit{asymptotically decoupled} equations; note that when the momentum
parameter is constant, the transformed equations would be \textit{exactly}
decoupled, as in \cite{MV-TUKR-arxiv25}.
However, the scope of the present paper is broader, because only the SHB
algorithm is studied in \cite{MV-TUKR-arxiv25}.

We have also analyzed the behavior of the decoupling approach proposed in
\cite{Sebbouh-et-al-CoLT21} for the SHB algorithm, and showed that it is
not feasible when the momentum parameter varies with time.
The details can be found in the Appendix.

In this paper, even though the momentum parameter $\mu_t$ is allowed to
vary with time, it is assumed that it is bounded away from one.
However, in the standard version of SNAG, $\mu_t$
approaches one; hence the theory in this paper does not apply.
It is therefore worth pondering whether the present
approach can somehow be modified to cover the standard version of SNAG.

In this connection, it is worthwhile to recall a numerical example
from \cite{MV-TUKR-arxiv25}.
A variety of computational algorithms are appied in
\cite[Section 4]{MV-TUKR-arxiv25} to, among others, the following
function, referred to as $J_2(\cdot)$ there:
\bd
J(\bth) = \bth^\top A \bth + 3\sin^2 ( \IP{\oneb}{\bth} ) ,
\ed
where $\bth_t$ is a vector of 1 million parameters,
$A$ is a block-diagonal matrix of size ($10^6 \times 10^6$),
consisting of 100 Hilbert matrices, each of dimension $10^4 \times 10^4$,
and $\oneb$ denotes a column vector of all ones.
This function is \textit{not convex}, but it \textit{does} satisfy
the PL property, as can be easily verified.

In \cite{MV-TUKR-arxiv25} the authors attempted to minimize this function using
approximate gradients computed using the SPSA approach of \eqref{eq:224a},
and a variety of optimization algorithms.
The results, extracted from \cite{MV-TUKR-arxiv25}, are shown in 
Figure \ref{fig:1}.
\bfig
\bc
\includegraphics[width=125mm]{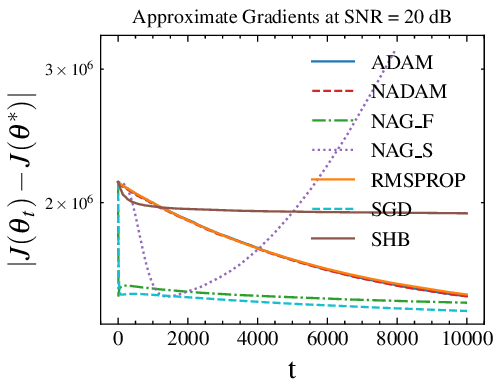}
\ec
\caption{Computational results for the numerical example with a variety
of algorithms}
\label{fig:1}
\efig
As can be seen from the figure, the standard Nesterov algorithm (SNAG)
diverges almost immediately.
This counter-example shows that, if the momentum parameter converges to 1,
\textit{and approximate derivatives of the form \eqref{eq:224a} are used},
then SNAG may not converge.

We surmise that the main reason for the divergence of SNAG when
approximate gradients are used is that the constant $M_t$ in \eqref{eq:2110}
grows without bound, as $c_t \ap 0$.
It is therefore worthwhile exploring whether an appropriate convergence
theory can be developed in the present framework, wherein
$\mu_t \ap 1^-$ as $\tai$, but $M_t$ is bounded.
This is a worthwhile problem for future research.

\section{Appendix}

In this appendix, we analyze the behavior of the solutions of
\eqref{eq:314} introduced in \cite{Sebbouh-et-al-CoLT21},
reproduced here for the convenience of the reader.
\bd
\la_{t+1} = \frac{\la_t}{\mu_t} - 1 ,
\eta_t = (1 + \la_{t+1}) \al_t
\ed
Recall from \eqref{eq:311} that the convergence conditions in
\cite{Sebbouh-et-al-CoLT21} involve the ``synthetic'' step size sequence
$\{ \eta_t \}$.
The objective of the Appendix is to show that this approach is not feasible.
Specifically, if $\{ \mu_t \}$ is a decreasing sequence, then $\la_t \ap
\infty$ as $\tai$.
Thus, even if the original step size sequence $\{ \al_t \}$ is
square-summable, the synthetic sequence of step sizes $\{ \eta_t \}$
need not be.
Thus the assumptions in \eqref{eq:311} are strictly stronger than 
the standard Robbins-Monro conditions, which is what is assumed here.
In the other direction, if $\{ \mu_t \}$ is an increasing sequence
bounded away from $1$, eventually $1 + \la_{t+1} < 0$, thus leading to
a negative step size $\eta_t$, which is absurd.
Thus the point is that, while the approach in
\cite{Sebbouh-et-al-CoLT21} is quite elegant, it is not practical.

We begin by presenting a ``closed-form'' formula for $\la_{t+1}$ as a function
of the $\mu_t$ sequence.
Write the first equation in \eqref{eq:314} as
\be\label{eq:318f}
\begin{split}
\la_{t+1} &= \frac{\la_t}{\mu_t} - 1 = \frac{\la_t - \la_{t-1}}{\mu_t}
+ \frac{\la_{t-1}}{\mu_t} - 1 \\
&= \frac{1}{\mu_t} ( \la_t - \la_{t-1} ) +
\left( \frac{1}{\mu_t} - \frac{1}{\mu_{t-1}} \right) \la_{t-1} 
+ \frac{\la_{t-1}}{\mu_{t-1}} - 1 \\
&= \la_t + \frac{1}{\mu_t} ( \la_t - \la_{t-1} )
+ \left( \frac{1}{\mu_t} - \frac{1}{\mu_{t-1}} \right) \la_{t-1} .
\end{split}
\ee
Therefore
\be\label{eq:319f}
\la_{t+1} - \la_t = \frac{1}{\mu_t} ( \la_t - \la_{t-1} )
+ \left( \frac{1}{\mu_t} - \frac{1}{\mu_{t-1}} \right) \la_{t-1} .
\ee
It is easy to show by induction
that a ``closed-form'' solution to \eqref{eq:319f} is
\be\label{eq:3110}
\la_{t+1} = \la_t + \left[ \prod_{\tau = 1}^t \frac{1}{\mu_\tau} \right]
( \la_1 - \la_0) 
+  \sum_{\tau = 1}^t \left[ \prod_{s = \tau}^{t-1} \frac{1}{\mu_s} \right]
\left( \frac{1}{\mu_\tau} - \frac{1}{\mu_{\tau-1} }\right) \la_{\tau-1} ,
\ee
where empty products are taken as $1$ and empty sums are taken as $0$.
Note that $\la_0$ is unspecified.
So if we take $\la_0 = \mu_0/(1 - \mu_0)$, then
\bd
\la_1 = \frac{\la_0}{\mu_0} - 1 = \frac{1}{1-\mu_0} - 1 
= \frac{\mu_0}{1-\mu_0} = \la_0 .
\ed
With this choice, the first term in \eqref{eq:3110} drops out;
but this is not much of a simplification.
Note also that if $\mu_t = \mu$ for all $t$, then $\la_t = \la_0$ for all $t$.

Now let us analyze the behavior of $\la_t$ in two specific situations:
(i) $\{ \mu_t \}$ is a strictly decreasing, i.e., $\mu_t < \mu_{t-1}$
for all $t$, and (ii) $\{ \mu_t \}$ is strictly increasing, but
bounded above by some $\mub < 1$.
In principle, the closed form solution \eqref{eq:3110} can be used to
analyze arbitrary sequences $\{ \mu_t \}$.
However, the two situations studied here are perhaps the most natural.

\begin{lemma}\label{lemma:A1}
Suppose $\la_0$ is chosen as $\mu_0/(1-\mu_0)$, so that $\la_1 = \la_0$.
Suppose further that $\mu_t < \mu_{t-1}$ for all $t \geq 1$.
Then $\la_t \ap \infty$ as $\tai$.
\end{lemma}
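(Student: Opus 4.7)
The plan is to exhibit a geometrically growing lower bound for $\la_t$ by changing variables to measure how far $\la_t$ sits above the ``instantaneous fixed point'' of the recursion. Define $g_t := \mu_t/(1-\mu_t)$; if $\mu$ were frozen at the value $\mu_t$, the recursion $\la_{t+1} = \la_t/\mu_t - 1$ would have $g_t$ as its unique fixed point, and by the choice of $\la_0$ we have $\la_1 = g_0$.

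First I would prove by induction on $t$ that $\la_t \geq g_{t-1}$ for all $t \geq 1$. The base case is the equality $\la_1 = g_0$. For the inductive step, the desired inequality $\la_{t+1} = \la_t/\mu_t - 1 \geq g_t$ reduces, after clearing denominators, to $\mu_{t-1}(1-\mu_t) \geq \mu_t(1-\mu_{t-1})$, i.e.\ $\mu_{t-1} \geq \mu_t$, which is exactly the strict-monotonicity hypothesis. Because $g$ is increasing in $\mu$ and $\mu$ is strictly decreasing, $g_{t-1} > g_t$, so the ``excess'' $r_t := \la_t - g_t$ is strictly positive for every $t \geq 1$; in particular $r_1 = g_0 - g_1 > 0$.

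The central step is to derive a clean recursion for $r_t$. Substituting $\la_t = g_t + r_t$ into the defining recursion and using the identity $g_t/\mu_t = 1 + g_t$ produces
\[
r_{t+1} \;=\; \frac{r_t}{\mu_t} + (g_t - g_{t+1}).
\]
Both terms on the right are nonnegative, so in particular $r_{t+1} \geq r_t/\mu_t$. Iterating,
\[
r_t \;\geq\; r_1 \prod_{s=1}^{t-1} \frac{1}{\mu_s}.
\]
By the standing assumption in Section \ref{sssec:213} we have $\mu_s \leq \mub < 1$ for every $s$, so the product diverges at least as fast as $\mub^{-(t-1)}$, and $r_t \ap \infty$. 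Hence $\la_t = g_t + r_t \ap \infty$ as $\tai$.

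The only real subtlety is finding the right change of variables: once $r_t$ is taken to be the deviation from the \emph{current} fixed point $g_t$ rather than from a fixed constant, the strict decrease of $\{\mu_t\}$ simultaneously (i) guarantees $r_1 > 0$ via the base case of the induction, and (ii) contributes the nonnegative drift $g_t - g_{t+1}$ which prevents the multiplicative factor $1/\mu_t > 1$ from being offset by a shift term. The geometric blowup of $r_t$, and hence of $\la_t$, then comes for free from the uniform bound $\mu_s \leq \mub < 1$.
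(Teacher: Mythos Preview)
Your argument is correct but proceeds by a genuinely different change of variables than the paper's. The paper works with the forward differences $\la_{t+1}-\la_t$: it first shows by induction that $\la_{t+1}>\la_t$ for $t\ge 1$, then uses the identity
\[
\la_{t+1}-\la_t=\frac{1}{\mu_t}(\la_t-\la_{t-1})+\Bigl(\frac{1}{\mu_t}-\frac{1}{\mu_{t-1}}\Bigr)\la_{t-1}
\]
and the positivity of the second term to obtain $\la_{t+1}-\la_t>(1/\mu_t)(\la_t-\la_{t-1})$, iterates to a geometric lower bound on the differences, and telescopes. You instead center at the instantaneous fixed point $g_t=\mu_t/(1-\mu_t)$ and track $r_t=\la_t-g_t$, which yields the exact recursion $r_{t+1}=r_t/\mu_t+(g_t-g_{t+1})$. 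Your route is arguably tidier: positivity of $r_t$ and the geometric lower bound both fall out of a single recursion, whereas the paper needs two separate induction steps (monotonicity of $\la_t$, then the difference estimate). One minor remark: you do not need to invoke the standing bound $\mu_t\le\mub$; since $\{\mu_t\}$ is strictly decreasing you already have $\mu_s\le\mu_1<\mu_0<1$ for $s\ge 1$, which suffices for $\prod_{s=1}^{t-1}(1/\mu_s)\ap\infty$.
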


\begin{proof}
The first step is to show that $\la_{t+1} > \la_t$ for all $t \geq 1$.
The proof is by induction.
First, for $t=1$, we have that
\bd
\la_2 = \frac{\la_1}{\mu_1} - 1 > \frac{\la_1}{\mu_0} - 1
= \frac{1}{1 - \mu_0} - 1 = \la_1 .
\ed
Next suppose $\la_t > \la_{t-1}$.
Then
\bd
\la_{t+1} = \frac{\la_t}{\mu_t} - 1 > \frac{\la_{t-1}}{\mu_{t-1}} - 1 = \la_t .
\ed
This completes the proof by induction.

Next, we invoke the recursion \eqref{eq:319f}.
\bd
\la_{t+1} - \la_t = \frac{1}{\mu_t} ( \la_t - \la_{t-1} )
+ \left( \frac{1}{\mu_t} - \frac{1}{\mu_{t-1}} \right) \la_{t-1} .
\ed
The fact that $\mu_t < \mu_{t-1}$ implies that
\bd
\left( \frac{1}{\mu_t} - \frac{1}{\mu_{t-1}} \right) \la_{t-1} > 0 ,
\fa t \geq 2 .
\ed
Hence
\bd
\la_{t+1} - \la_t > \frac{1}{\mu_t} ( \la_t - \la_{t-1} )
> \frac{1}{\mu_2} ( \la_t - \la_{t-1} ) , \fa t \geq 2 .
\ed
As a consequence
we get
\bd
\la_{t+1} - \la_t > \left[ \prod_{s=2}^t \frac{1}{\mu_s}  \right]
( \la_2 - \la_1 )
> \left( \frac{1}{\mu_2} \right)^{t-1}
(\la_2 - \la_1) , \fa t \geq 2 .
\ed
We can add the above bound for all $t$.
Because it is a telescoping sum, we get
\bd
\la_{t+1} = \la_2 + \sum_{k=2}^t ( \la_{k+1} - \la_k )
\geq (\la_2 - \la_1) \sum_{k=2}^t \left( \frac{1}{\mu_2} \right)^{k-1} 
\ap \infty \mbox{ as } \tai .
\ed
\end{proof}

\begin{lemma}\label{lemma:A2}
Suppose $\mu_{t-1} < \mu_t < 1$ for all $t$, and that
\be\label{eq:3111}
\prod_{\tau = 2}^t \left( \frac{1}{\mu_\tau} \right) \ap \infty
\mbox{ as } \tai .
\ee
Then there exists a finite $t_0$ such that
\be\label{eq:3112}
1 + \la_t < 0 , \fa t \geq t_0 .
\ee
In particular, if $\mu_t \leq \mub < 1$ for all $t$, then we can take
\be\label{eq:3113}
t_0 = 3 + \log_{(1/\mub)} \frac{\la_1}{\la_1 - \la_2} .
\ee
\end{lemma}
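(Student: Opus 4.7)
The plan is to mirror the increment-tracking approach of Lemma \ref{lemma:A1}, but exploit the fact that when $\mu_t$ is \emph{increasing} the correction term $\left(\frac{1}{\mu_t} - \frac{1}{\mu_{t-1}}\right) \la_{t-1}$ in the recursion \eqref{eq:319f} is \emph{non-positive} whenever $\la_{t-1} \geq 0$. Writing $d_t := \la_t - \la_{t-1}$, this forces
\bd
d_{t+1} \leq \frac{d_t}{\mu_t}
\ed
as long as $\la_{t-1} \geq 0$, so the increments stay negative and blow up geometrically in magnitude.

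First I would verify the base case $d_2 = \la_2 - \la_1 < 0$, which follows from the implicit choice $\la_0 = \mu_0/(1-\mu_0)$ inherited from Lemma \ref{lemma:A1}: this gives $\la_1 = \mu_0/(1-\mu_0)$, and the assumption $\mu_0 < \mu_1$ easily implies $\la_1 < \mu_1/(1-\mu_1)$, which rearranges to $\la_2 < \la_1$. Iterating the one-step inequality as long as all preceding $\la_k$ are non-negative yields
\bd
|d_{t+1}| \geq |d_2| \prod_{\tau=2}^t \frac{1}{\mu_\tau} ,
\ed
and telescoping through $\la_{t+1} = \la_1 + \sum_{k=2}^{t+1} d_k$ gives
\bd
\la_{t+1} \leq \la_1 - |d_2| \sum_{k=2}^{t+1} \prod_{\tau=2}^{k-1} \frac{1}{\mu_\tau} .
\ed
By hypothesis \eqref{eq:3111} the right side diverges to $-\infty$, so the first crossing time $T^* := \min\{t : \la_t \leq 0\}$ is finite.

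Once $T^*$ is reached, a one-line bootstrap finishes the job: since $\mu_{T^*} \in (0,1)$ and $\la_{T^*} \leq 0$, we have $\la_{T^* + 1} = \la_{T^*}/\mu_{T^*} - 1 \leq -1$; and then induction using $\la_s \leq -1$ together with $\mu_s \in (0,1)$ gives $\la_s/\mu_s < \la_s$, hence $\la_{s+1} < \la_s - 1 < -1$, so $1 + \la_t < 0$ strictly for all $t \geq T^* + 2$. For the explicit bound \eqref{eq:3113}, I would specialize to $\mu_t \leq \mub$, use the coarse estimate $\prod_{\tau=2}^{k-1}(1/\mu_\tau) \geq (1/\mub)^{k-2}$ to dominate the sum from below by a single geometric term $(1/\mub)^{t-1}$, and solve $|d_2|(1/\mub)^{t-1} \geq \la_1$ for $t$, obtaining $t - 1 \geq \log_{(1/\mub)}(\la_1/(\la_1 - \la_2))$; the additive constant in $t_0 = 3 + \log_{(1/\mub)}(\cdots)$ then absorbs the bootstrap from $\le 0$ to $\le -1$ together with a rounding buffer.

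The main obstacle is simply the bookkeeping around the conditional inequality $d_{t+1} \leq d_t/\mu_t$, which is only valid while $\la_{t-1} \geq 0$. This is handled cleanly by defining $T^*$ as above and running the induction only on $\{2, \ldots, T^*\}$, so no circularity arises; everything beyond $T^*$ is governed by the trivial one-step bootstrap on negative $\la$'s.
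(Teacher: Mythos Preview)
Your proposal is correct and follows essentially the same argument as the paper: both track the increments $d_t = \la_t - \la_{t-1}$, use the recursion \eqref{eq:319f} together with $\la_{t-1} \ge 0$ to get $|d_{t+1}| \ge |d_t|/\mu_t$, telescope to force $\la_t$ below zero by the divergence hypothesis \eqref{eq:3111}, and then bootstrap $\la_t \le 0 \Rightarrow \la_{t+1} \le -1 \Rightarrow 1+\la_{t+2} < 0$. Your handling of the conditional inequality via the stopping time $T^*$ is exactly the case split the paper makes, and your derivation of the explicit constant \eqref{eq:3113} (drop the sum to its last term, bound the product by $(1/\mub)^{t-1}$, solve, add $2$ for the bootstrap) matches the paper's computation.
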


\begin{proof}
Observe that
\bd
\la_2 = \frac{\la_1}{\mu_1} - 1 < \frac{\la_1}{\mu_0} - 1 = \la_1 .
\ed
Now suppose that $\la_t < \la_{t-1}$.
Then
\bd
\la_{t+1} = \frac{\la_t}{\mu_t} - 1 < \frac{\la_{t-1}}{\mu_{t-1}} - 1 = \la_t .
\ed
After observing that
\bd
\frac{1}{\mu_t} - \frac{1}{\mu_{t-1}} < 0 ,
\ed
we can rewrite\eqref{eq:319f} as
\be\label{eq:3114}
\la_t - \la_{t+1} = \frac{1}{\mu_t} ( \la_{t-1} - \la_t )
+ \left( \frac{1}{\mu_{t-1}} - \frac{1}{\mu_t} \right) \la_{t-1} .
\ee

Now suppose $\la_\tau > 0$ for $\tau = 1 , \cdots , t$.
Then \eqref{eq:3114} implies that
\be\label{eq:3115}
\la_\tau - \la_{\tau+1} > \frac{1}{\mu_\tau} ( \la_{\tau-1} - \la_\tau )
> \left( \prod_{k=2}^\tau \frac{1}{\mu_k} \right) ( \la_1 - \la_2 ) .
\ee
\be\label{eq:3116}
\la_1 - \la_{t+1} = \sum_{\tau = 1}^t ( \la_\tau - \la_{\tau-1} ) 
> \left[ \sum_{\tau=1}^t \left( \prod_{k=2}^t \frac{1}{\mu_k} \right)
\right] ( \la_1 - \la_2 ) .
\ee
Consequently
\be\label{eq:3117}
\begin{split}
\la_{t+1} & < \la_1 - \left[ \sum_{\tau=1}^t \left( \prod_{k=2}^\tau
\frac{1}{\mu_k} \right) \right] ( \la_1 - \la_2 ) \\
& < \la_1 - \left( \prod_{k=2}^t
\frac{1}{\mu_k} \right) ( \la_1 - \la_2 ) .
\end{split}
\ee
Now choose $T$ such that
\be\label{eq:3118}
\left( \prod_{k=2}^T \frac{1}{\mu_k} \right) > \frac{\la_1}{\la_1 - \la_2} .
\ee
This is possible in view of \eqref{eq:3111}.
Then there are two possibilities:
(i) $\la_\tau > 0$ for $\tau = 2 , \cdots , T$.
Then $\la_{T+1} < 0$ by virtue of \eqref{eq:3118}.
In this case we have that
\bd
\la_{T+2} = \frac{\la_{T+1}}{\mu_{T+1}} - 1 < -1 .
\ed
Therefore $\la_{T+2} + 1 < 0$.
The argument can be repeated, to show that $\la_t + 1 < 0$ for
all $t \geq T+2$.
Hence we can take $t_0 = T+2$ in \eqref{eq:3112}.
(ii) There exists a $\tau$ between $2$ and $T$ such that $\la_\tau \leq 0$.
By the above reasoning, it follows that
\bd
\la_{\tau+1} = \frac{\la_\tau}{\mu_\tau} - 1 \leq -1 < 0 .
\ed
Therefore
\bd
\la_{\tau+2} = \frac{\la_{\tau+1}}{\mu_{\tau+1}} - 1 < - 1 ,
\mbox{ or } \la_{\tau+2} + 1 < 0 .
\ed
As above, this leads to the conclusion that
$\la_t + 1 < 0$ for all $t \geq \tau + 2$.
Since $\tau \leq T$, we can conclude as before that $\la_t + 1 < 0$ for
all $t \geq T+2$.
Hence we can again take $t_0 = T+2$ in \eqref{eq:3112}.

To prove the last claim, suppose that $\mu_t \leq \mub < 1$ for all $t$.
Then we can replace \eqref{eq:3118} by
\bd
\left( \frac{1}{\mub} \right)^{T-1} \geq \frac{\la_1}{\la_1 - \la_2} .
\ed
Solving for $T$ and choosing $t_0 = T+2$ gives \eqref{eq:3113}.
\end{proof}


\begin{thebibliography}{10}

\bibitem{Apido-et-al-JGO22}
Vassilis Apidopoulos, Nicolò Ginatta, and Silvia Villa.
\newblock Convergence rates for the heavy-ball continuous dynamics for
  non-convex optimization, under polyak–lojasiewicz condition.
\newblock {\em Journal of Global Optimization}, 84:563--589, 2022.

\bibitem{Arjevani-et-al-MP23}
Yossi Arjevani, Yair Carmon, John~C. Duchi, Dylan~J. Foster, Nathan Srebro, and
  Blake Woodworth.
\newblock Lower bounds for non-convex stochastic optimization.
\newblock {\em Mathematical Programming}, 199(1--2):165--214, 2023.

\bibitem{Aujol-et-al-SIAMO19}
Jean-Franc\c{o}is Aujol, Charles Dossal, and Aude Rondepierre.
\newblock Optimal convergence rates for nesterov acceleration.
\newblock {\em {SIAM} Journal on Optimization}, 29(4):3131--3153, 2019.

\bibitem{Aujol-et-al-MP23}
Jean-Franc\c{o}is Aujol, Charles Dossal, and Aude Rondepierre.
\newblock Convergence rates of the heavy-ball method under the lojasiewicz
  property.
\newblock {\em Mathematical Programming}, 198(1):198--254, 2023.

\bibitem{Bengio-et-al-ICASSP13}
Yoshua Bengio, Nicolas Boulanger-Lewandowski, and Razvan Pascanu.
\newblock Advances in optimizing recurrent networks.
\newblock In {\em 2013 IEEE International Conference on Acoustics, Speech and
  Signal Processing}, pages 8624--8628, 2013.

\bibitem{BMP90}
Albert Benveniste, Michel M\'{e}tivier, and Pierre Priouret.
\newblock {\em Adaptive Algorithms and Stochastic Approximation}.
\newblock Springer-Verlag, 1990.

\bibitem{Ber-Tsi-SIAM00}
Dmitri~P. Bertsekas and John~N. Tsitsiklis.
\newblock Global convergence in gradient methods with errors.
\newblock {\em {SIAM} Journal on Optimization}, 10(3):627--642, 2000.

\bibitem{Blum54}
Julius~R. Blum.
\newblock Multivariable stochastic approximation methods.
\newblock {\em Annals of Mathematical Statistics}, 25(4):737--744, 1954.

\bibitem{Bottou-et-al-SIAM18}
L\'{e}on Bottou, Frank~E. Curtis, and Jorge Nocedal.
\newblock Optimization methods for large-scale machine learning.
\newblock {\em SIAM Review}, 60(2):223--311, 2018.

\bibitem{Durrett19}
Rick Durrett.
\newblock {\em Probability: Theory and Examples (5th Edition)}.
\newblock Cambridge University Press, 2019.

\bibitem{Fran-Gram21}
Barbara Franci and Sergio Grammatico.
\newblock Convergence of sequences: A survey.
\newblock arxiv:2111.11374, 2011.

\bibitem{Gadat-et-al-EJS18}
S\'{e}bastien Gadat, Fabien Panloup, and Soﬁane Saadane.
\newblock Stochastic heavy ball.
\newblock {\em Electronic Journal of Statistics}, 12:461--529, 2018.

\bibitem{Ghadimi-et-al-ECC15}
Euhanna Ghadimi, Hamid~Reza Feyzmahdavian, and Mikael Johansson.
\newblock Global convergence of the heavy-ball method for convex optimization.
\newblock In {\em Proceedings of the 2015 European Control Conference}, pages
  311--316, 2015.

\bibitem{MV-RLK-SGD-arxiv23}
Rajeeva~L. Karandikar and M.~Vidyasagar.
\newblock Convergence rates for stochastic approximation: Biased noise with
  unbounded variance, and applications.
\newblock https://arxiv.org/pdf/2312.02828v3.pdf, May 2024.

\bibitem{MV-RLK-SGD-JOTA24}
Rajeeva~L. Karandikar and M.~Vidyasagar.
\newblock Convergence rates for stochastic approximation: Biased noise with
  unbounded variance, and applications.
\newblock {\em Journal of Optimization Theory and Applications},
  203:2412--2450, October 2024.

\bibitem{Karimi-et-al16}
Hamed Karimi, Julie Nutini, and Mark Schmidt.
\newblock Linear convergence of gradient and proximal-gradient methods under
  the polyak- lojasiewicz condition.
\newblock {\em Lecture Notes in Computer Science}, 9851:795--811, 2016.

\bibitem{Khaled-Rich-arxiv20}
Ahmed Khaled and Peter Richt\'{a}rik.
\newblock {Better Theory for SGD in the Nonconvex World}.
\newblock arXiv:2002.03329, February 2020.

\bibitem{Kief-Wolf-AOMS52}
J.~Kiefer and J.~Wolfowitz.
\newblock Stochastic estimation of the maximum of a regression function.
\newblock {\em Annals of Mathematical Statistics}, 23(3):462--466, 1952.

\bibitem{Liu-et-al-NC23}
Jinlan Liu, Dongpo Xu, Yinghua Lu, Jun Kong, and Danilo~P. Mandic.
\newblock Last-iterate convergence analysis of stochastic momentum methods for
  neural networks.
\newblock {\em Neurocomputing}, 527:27--35, 2023.

\bibitem{Liu-Yuan-arxiv22}
Jun Liu and Ye~Yuan.
\newblock On almost sure convergence rates of stochastic gradient methods.
\newblock In Po-Ling Loh and Maxim Raginsky, editors, {\em Proceedings of
  Thirty Fifth Conference on Learning Theory}, volume 178 of {\em Proceedings
  of Machine Learning Research}, pages 2963--2983. PMLR, 02--05 Jul 2022.

\bibitem{Nesterov-Dokl83}
Yurii Nesterov.
\newblock A method for unconstrained convex minimization problem with the rate
  of convergence $o(1/k^2)$n (in russian).
\newblock {\em Soviet Mathematics Doklady}, 269:543--547, 1983.

\bibitem{Nesterov04}
Yurii Nesterov.
\newblock {\em Introductory Lectures on Convex Optimization: A Basic Course},
  volume~87.
\newblock Springer Scientific+Business Media, 2004.

\bibitem{Nesterov-FCM17}
Yurii Nesterov and Vladimir Spokoiny.
\newblock {Random Gradient-Free Minimization of Convex Functions}.
\newblock {\em Foundations of Computational Mathematics}, 17(2):527--566, 2017.

\bibitem{Polyak-UCMMP63}
B.~T. Polyak.
\newblock Gradient methods for the minimisation of functionals.
\newblock {\em {USSR Computational Mathematics and Mathematical Physics}},
  3(4):864--878, 1963.

\bibitem{Polyak-CMMP64}
B.T. Polyak.
\newblock Some methods of speeding up the convergence of iteration methods.
\newblock {\em USSR Computational Mathematics and Mathematical Physics},
  4(5):1--17, 1964.

\bibitem{MV-TUKR-arxiv25}
Tadipatri Uday~Kiran Reddy and Mathukumalli Vidyasagar.
\newblock Convergence of momentum-based heavy ball method with batch updating
  and/or approximate gradients.
\newblock https://arxiv.org/pdf/2303.16241V4.pdf, April 2025.

\bibitem{Robb-Sieg71}
H.~Robbins and D.~Siegmund.
\newblock {\em A convergence theorem for non negative almost supermartingales
  and some applications}, pages 233--257.
\newblock Elsevier, 1971.

\bibitem{Robbins-Monro51}
Herbert Robbins and Sutton Monro.
\newblock A stochastic approximation method.
\newblock {\em Annals of Mathematical Statistics}, 22(3):400--407, 1951.

\bibitem{Sadegh-Spall-TAC98}
P.~Sadegh and J.~C. Spall.
\newblock Optimal random perturbations for stochastic approximation using a
  simultaneous perturbation gradient approximation.
\newblock {\em {IEEE transactions on automatic control}}, 43(10):1480--1484,
  1998.

\bibitem{Sebbouh-et-al-CoLT21}
Othmane Sebbouh, Robert~M Gower, and Aaron Defazio.
\newblock Almost sure convergence rates for stochastic gradient descent and
  stochastic heavy ball.
\newblock In {\em Proceedings of Thirty Fourth Conference on Learning Theory,
  {PMLR}}, volume 134, pages 3935--3971, 2021.

\bibitem{Zou-et-al-arxiv18}
Li~Shen, Congliang Chen, Fangyu Zou, Zequn Jie, Ju~Sun, and Wei Liu.
\newblock A unified analysis of adagrad with weighted aggregation and momentum
  acceleration.
\newblock arxiv:1808.03408, August 2018.

\bibitem{Su-Boyd-Candes16}
Weijie Su, Stephen Boyd, and Emmanuel~J. Cand\`{e}s.
\newblock A differential equation for modeling nesterov’s accelerated
  gradient method: Theory and insights.
\newblock {\em Journal of Machine Learning Research}, 17:1--43, 2016.

\bibitem{Sutskever-et-al-ICML13}
Ilya Sutskever, James Martens, George Dahl, and Geoffrey Hinton.
\newblock On the importance of initialization and momentum in deep learning.
\newblock In {\em Proceedings of the 30th international conference on machine
  learning (ICML-13)}, pages 1139--1147, 2013.

\bibitem{Williams91}
David Williams.
\newblock {\em Probability with Martingagles}.
\newblock Cambridge University Press, 1991.

\bibitem{Yan-et-al-IJCAI18}
Yan Yan, Tianbao Yang, Zhe Li, Qihang Lin, and Yi~Yang.
\newblock A unified analysis of stochastic momentum methods for deep learning.
\newblock In {\em Proceedings of the 27th International Joint Conference on
  Artificial Intelligence}, IJCAI'18, page 2955–2961. AAAI Press, 2018.

\end{thebibliography}

\end{document}